\def\volume{\operatorname{vol}}
\def\op{\operatorname}
\begin{document}

% Define new math environments
\newtheorem{Thm}{Theorem}[section]
\newtheorem{Def}{Definition}[section]
\newtheorem{Lem}[Thm]{Lemma}
\newtheorem{Cor}[Thm]{Corollary}
\newtheorem{sublemma}{Sub-Lemma}
\newtheorem{Prop}{Proposition}[section]

\theoremstyle{remark}%
\newtheorem{Rem}{Remark}[section]
\newtheorem{Example}{Example}[section]
% Define some short expressions
\newcommand{\g}[0]{\textmd{g}}
\newcommand{\pr}[0]{\partial_r}
\newcommand{\dif}{\mathrm{d}}
\newcommand{\bg}{\bar{\gamma}}
\newcommand{\md}{\rm{md}}
\newcommand{\cn}{\rm{cn}}
\newcommand{\sn}{\rm{sn}}
\newcommand{\seg}{\mathrm{seg}}

\newcommand{\Ric}{\mbox{Ric}}
\newcommand{\Iso}{\mbox{Iso}}
\newcommand{\ra}{\rightarrow}
\newcommand{\Hess}{\mathrm{Hess}}
\newcommand{\RCD}{\mathsf{RCD}}

\title[Quantitative rigidity of almost maximal volume entropy]{Quantitative rigidity of almost maximal volume entropy for both RCD spaces and integral Ricci curvature bound}

\author{Lina Chen\textsuperscript{*}}
\address[Lina Chen]{Department of mathematics, Nanjing University, Nanjing, P.R.C.}

\email{chenlina\_mail@163.com}
\thanks{\textsuperscript{*} Chen Supported partially by NSFC Grant 12001268.} 

\author{Shicheng Xu\textsuperscript{\dag}}
\address[Shicheng Xu]{Mathematics Department, Capital Normal University, Beijing,
P.R.C.}
\address[Shicheng Xu]{Academy for Multidisciplinary Studies, Capital Normal University, Beijing, P.R.C.}

\email{shichengxu@gmail.com}
\thanks{\textsuperscript{\dag} Xu Supported partially by Beijing NSF Grant No. Z190003 and by NSFC Grant 11821101}

\maketitle

\begin{abstract}

\setlength{\parindent}{10pt} \setlength{\parskip}{1.5ex plus 0.5ex
minus 0.2ex} %\noindent
The volume entropy of a compact metric measure space is known to be the exponential growth rate of the measure lifted to its universal cover at infinity. For a compact Riemannian $n$-manifold with a negative lower Ricci curvature bound and a upper diameter bound, it was known that it admits an almost maximal volume entropy if and only if it is diffeomorphic and Gromov-Hausdorff close to a hyperbolic space form. We prove the  quantitative rigidity of almost maximal volume entropy for $\RCD$-spaces with a negative lower Ricci curvature bound and Riemannian manifolds with a negative $L^p$-integral Ricci curvature lower bound.  
\end{abstract}

 \section{Introduction}
 
Sphere theorems are classical results in Riemannian geometry. By Bishop-Gromov volume comparison, a Riemannian $n$-manifold $(M^n,g)$ with Ricci curvature $\ge (n-1)$ admits a volume no more than $n$-sphere $\mathbb{S}^n$ of constant curvature $1$, and equality holds if and only if $(M^n,g)$ is isometric to $\mathbb{S}^n$. By Perelman \cite{Pe}, Colding \cite{Co} and Cheeger-Colding \cite{CC2}, the volume of $(M^n,g)$ is closed to that of $\mathbb{S}^n$ if and only if $(M^n,g)$ is diffeomorphic and Gromov-Hausdorff close to $\mathbb S^n$. 

Similar phenomena also happens for hyperbolic manifolds and the volume entropy. For a compact Riemannian manifold $(M,g)$, its volume entropy is defined to be the exponential growth rate of volume at infinity of the Riemannian universal cover $(\tilde M, \tilde g)$ of $(M,g)$, i.e., 
$$h(M,g)=\lim_{R\to \infty}\frac{\ln\volume(B_R(\tilde x))}{R},$$
where $\tilde x\in \tilde M$ is a fixed point.
When $M$ is compact, the limit always exists and is independent of the choice of $\tilde x$ \cite{Ma}. By direct calculation, for any hyperbolic manifold $\mathbb{H}^n/\Gamma$, $h(\mathbb{H}^n/\Gamma)=n-1$. By Bishop volume comparison, the volume entropy $h(M^n,g)$ of a compact Riemannian $n$-manifold $(M^n,g)$ with Ricci curvature $\ge -(n-1)$ is no more than $n-1$, and by Ledrappier-Wang \cite{LW} (cf. Liu \cite{Liu}) equality holds if and only if $(M^n,g)$ is isometric to a hyperbolic manifold $\mathbb{H}^n/\Gamma$. By Chen-Rong-Xu \cite{CRX}, after fixed a diameter upper bound $\operatorname{diam}(M^n,g)\le D$, $(M^n,g)$ is diffeomorphic and Gromov-Hausdorff close to a hyperbolic manifold if and only if $h(M^n,g)$ is close to $n-1$.  

More general metric spaces with curvature bounds have been extensively studied in the recent three decades, including Alexandrov spaces with curvature bounded below (\cite{BGP}), and $\RCD(K,N)$-spaces (\cite{LV}, \cite{St1, St2}, \cite{AGS}, \cite{Gi13, Gi15}, \cite{EKS}, \cite{AMS19}, \cite{CMi} etc), which are generalizations of sectional curvature lower bound and Ricci curvature lower bound on metric spaces and metric measure spaces respectively. If $(X,d)$ is an Alexandrov $n$-space with curvature $\ge \kappa$, then for any constant $c>0$, $(X,d,c\mathcal{H}^n)$ is a $\RCD((n-1)\kappa,n)$-space, where $\mathcal{H}^n$ is the $n$-dimensional Hausdorff measure of $(X,d)$ (\cite{Pe}, \cite{ZZ}, \cite{AGS}).

The sphere theorems and their quantitative rigidity above are known to hold for $\RCD(n-1,n)$-spaces \cite{PG1}. The maximal and quantitative maximal volume entropy rigidity have also been generalized to Alexandrov spaces in \cite{Ji} and \cite{Ch1} respectively. For a $\RCD(-(n-1), n)$-space $(X,d,\nu)$, its universal cover is known to exist (\cite{MW}). Hence its volume entropy $h(X,d,\nu)$ can be defined similarly on its universal cover $\tilde X$ with the lifted metric $\tilde d$ and measure $\tilde \nu$ (cf. \cite{Rev, BCGS}). By the $\RCD(-(n-1), n)$-curvature condition, $h(X,d,\nu)\le n-1$, and by \cite{CDNPSW}, $h(X,d,\nu)=n-1$ if and only if $(X,d,\nu)$ is isometric to $(\mathbb{H}^n/\Gamma,c\mathcal{H}^n)$ as metric measure spaces.

The quantitative rigidity of almost maximal volume entropy for $\RCD(-(n-1), n)$-spaces earlier are known only for some special cases, i.e., the case that the systole of $X$ has uniform lower bound (\cite{CDNPSW}), i.e., $\inf\{\tilde d(\tilde x, \gamma\tilde x)\,|\, \tilde x\in \tilde X, \gamma\in \bar\pi_1(X)\setminus\{e\}\}\geq l>0$, where $\bar \pi_1(X)$ is the group of deck-transform of $\tilde X$, and the case that $(X,d,\nu)$ is a non-collapsed $\RCD(-(n-1),n)$-space (\cite{Ch1}), i.e., $\nu$ equals to $n$-dimensional Hausdorff measure $\mathcal{H}^n$ of $(X,d)$ with $\mathcal{H}^n(B_1(x))\ge v_0>0$, where $B_1(x)=\{z\in X\,|\, d(x,z)\le 1\}$. 

The first main result in this note is the quantitative rigidity of almost maximal volume entropy for general $\RCD(-(n-1),n)$-spaces. Throughout the paper we use $\Psi(\epsilon|n,D)$ to denote a positive non-decreasing function in $\epsilon$ such that $\Psi(\epsilon|n,D)\to 0$ as $\epsilon\to 0$ and $n, D$ fixed.

\begin{Thm} \label{main-1}
	Given $n>1, D>0$, there exists $\epsilon(n, D)>0$ such that for $0<\epsilon<\epsilon(n, D)$, if a metric measure space $(X, d, \nu)\in \op{RCD}(-(n-1), n)$ satisfies 
	$$h(X, d, \nu)\geq n-1-\epsilon, \quad \op{diam}(X)\leq D,$$
	then $(X,d,\nu)$ is homeomorphic and $\Psi(\epsilon | n, D)$-measured-Gromov-Hausdorff close to a hyperbolic $n$-manifold $(\mathbb{H}^n/\Gamma,c\mathcal{H}^n)$. In particular, up to a renormalizing, $\nu$ coincides with $\mathcal{H}^n$ of $(X,d)$.
	
	Conversely, if $(X,d,\nu)\in \RCD(-(n-1), n)$ is $\epsilon$-measured-Gromov-Hausdorff close to a $(\mathbb{H}^n/\Gamma,\mathcal{H}^n)$, then $|h(X,d,\nu)-(n-1)|\le \Psi(\epsilon|n,D)$.
\end{Thm}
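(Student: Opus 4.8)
\medskip

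The plan is to prove the main (stability) assertion by a contradiction-and-compactness argument resting on the maximal volume entropy rigidity of \cite{CDNPSW}, and then to obtain the converse clause from the same continuity input. Suppose the first assertion fails for some $n>1$, $D>0$. Then there are $\epsilon_i\to 0$, a fixed $\delta_0>0$, and spaces $(X_i,d_i,\nu_i)\in\RCD(-(n-1),n)$ with $h(X_i,d_i,\nu_i)\ge n-1-\epsilon_i$ and $\operatorname{diam}(X_i)\le D$, such that no $X_i$ is simultaneously homeomorphic to and $\delta_0$-mGH close to a hyperbolic $n$-manifold. After normalizing so that $\nu_i(X_i)=1$, the measured-Gromov--Hausdorff precompactness of $\RCD(-(n-1),n)$ spaces of diameter $\le D$ yields a subsequence converging to a limit $(X_\infty,d_\infty,\nu_\infty)\in\RCD(-(n-1),n)$ with $\operatorname{diam}(X_\infty)\le D$.

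The heart of the proof is to show that the volume entropy does not drop in this limit, i.e.
$$h(X_\infty,d_\infty,\nu_\infty)\ \ge\ \limsup_{i\to\infty}h(X_i,d_i,\nu_i)\ =\ n-1,$$
which, combined with the curvature bound $h(X_\infty)\le n-1$, forces $h(X_\infty)=n-1$. To this end I would pass to the universal covers and argue equivariantly. Since $\operatorname{diam}(X_i)\le D$, a fundamental domain has bounded diameter, so by the \v{S}varc--Milnor comparison the exponential growth of $\tilde\nu_i$ on $\tilde X_i$ coincides with the orbit-counting rate of the deck group $\Gamma_i=\bar\pi_1(X_i)$: writing $N_i(R)=\#\{\gamma\in\Gamma_i:\tilde d_i(\tilde x_i,\gamma\tilde x_i)\le R\}$ one has $h(X_i)=\lim_{R\to\infty}R^{-1}\ln N_i(R)$. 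Applying Fukaya--Yamaguchi-type equivariant pointed mGH compactness to $(\tilde X_i,\tilde x_i,\Gamma_i)$ produces a limit triple $(\tilde X_\infty,\tilde x_\infty,G)$ with $G\le\Iso(\tilde X_\infty)$ closed and $\tilde X_\infty/G\cong X_\infty$. As $G$ is a quotient of $\bar\pi_1(X_\infty)$, the universal cover of $X_\infty$ dominates $\tilde X_\infty$, whence $h(X_\infty)$ is at least the exponential growth rate of the $G$-action on $\tilde X_\infty$; the crux is then to prove that this rate is not smaller than $\limsup_i h(X_i)=n-1$, i.e. that the $\Gamma_i$-orbit growth is preserved in the equivariant limit.

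Once $h(X_\infty)=n-1$ is established, the maximal volume entropy rigidity of \cite{CDNPSW} identifies $(X_\infty,d_\infty,\nu_\infty)$, up to renormalizing the measure, with a hyperbolic space form $(\mathbb{H}^n/\Gamma,c\mathcal{H}^n)$; in particular $X_\infty$ is a smooth, non-collapsed $n$-manifold. The topological and metric stability of non-collapsed $\RCD$ limits that are smooth manifolds then furnishes, for all large $i$, a homeomorphism $X_i\to X_\infty$ together with a $\delta_0$-mGH approximation, and the non-collapsed $\RCD$ theory forces $\nu_i$ to equal $\mathcal{H}^n$ up to a constant; this contradicts the choice of the $(X_i)$ and proves the stability direction with an effective modulus $\Psi(\epsilon|n,D)$. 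The converse clause is easier because the model is a fixed non-collapsed manifold: $\epsilon$-mGH closeness to $(\mathbb{H}^n/\Gamma,\mathcal{H}^n)$ forces, via the same topological stability, that $X$ shares the fundamental group of $\mathbb{H}^n/\Gamma$ and that the two universal covers are equivariantly mGH close, so the orbit-counting rates, and hence the entropies, differ by at most $\Psi(\epsilon|n,D)$; with the free bound $h\le n-1$ this yields $|h(X,d,\nu)-(n-1)|\le\Psi(\epsilon|n,D)$.

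The main obstacle is the non-dropping of the volume entropy, that is, the exclusion of collapse in the equivariant limit. Two degenerations must be controlled at once: collapse of the deck group, where nontrivial $\gamma_i\in\Gamma_i$ converge to the identity and $G$ acquires a positive-dimensional part, causing orbit count---and hence entropy---to be lost (this is what the uniform systole hypothesis of \cite{CDNPSW} rules out); and collapse of the reference measure, a drop in the essential dimension of $(X_\infty,\nu_\infty)$ (ruled out by the non-collapsing hypothesis of \cite{Ch1}). The new point is that near-maximal entropy alone precludes both: exponential growth at the full hyperbolic rate $n-1$ can be sustained neither by a group with continuous isotropy nor by a limit of dimension $<n$, so either degeneration would force $h(X_\infty)<n-1$. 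Making this quantitative requires a careful equivariant analysis of fundamental domains together with a Margulis/structure-of-$G$ argument adapted to $\RCD(-(n-1),n)$, which I expect to be the principal technical burden.
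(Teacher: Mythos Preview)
Your overall contradiction-and-compactness framework is the right one, and you correctly locate the crux: excluding collapse in the equivariant limit so that the entropy does not drop. However, your proposal leaves exactly this step as a black box (``a careful equivariant analysis \dots\ together with a Margulis/structure-of-$G$ argument''), and the intermediate assertion that ``$G$ is a quotient of $\bar\pi_1(X_\infty)$, so the universal cover of $X_\infty$ dominates $\tilde X_\infty$'' is not justified when $G$ has a positive-dimensional identity component $G_0$: in that case $\tilde X_\infty\to X_\infty$ is not even a covering map, and the comparison with $h(X_\infty)$ breaks down. Thus applying \cite{CDNPSW} to the limit cannot be carried out until \emph{after} you have shown $G_0=\{e\}$, which is precisely the difficulty.

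The paper's route is more concrete and does not pass through the rigidity theorem of \cite{CDNPSW} at all. From prior work (Theorem~\ref{uni-cov-1}, \cite{Ch1}) one already knows $\tilde X_i\to\mathbb{H}^k$ for some $1\le k\le n$. The generalized Margulis lemma of Breuillard--Green--Tao (Theorem~\ref{mar-lem}, with \emph{no} uniform index bound) yields a finite-index nilpotent normal subgroup $N_i$ of $\Gamma_i^\epsilon$, with limit $N$ normal in $G_0$. The new technical ingredient (Theorem~\ref{prop-hyp}) is that any such nilpotent limit $N$ acting on $\mathbb{H}^k$ consists of elliptic elements with a \emph{common fixed point}: hyperbolic and parabolic elements are ruled out by combining ping-pong arguments with the specific fact that the $N_i$ arise as limits of deck transformations (so geodesic-loop representatives force orbit points to be collinear in the limit, contradicting containment in a horosphere). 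The common fixed point forces each $N_i$, hence each $\Gamma_i^\epsilon$, to be finite. Only then does the orbit-counting comparison (Lemma~\ref{con-pro-1}) give $n-1=\lim h(X_i)\le h_{\hat x}(\hat X)\le k-1$, whence $k=n$ and $G_0=\{e\}$; one is now in the non-collapsed regime already handled in \cite{Ch1}, and $X_\infty=\mathbb{H}^n/G$ directly. Your sketch is missing precisely this fixed-point theorem for the limit nilpotent group, which is what substitutes for the uniform index bound unavailable in the $\RCD$ setting.
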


Some classical results for manifolds with lower Ricci curvature bounds are also generalized to  manifolds with integral Ricci curvature lower bound, including the Laplacian comparison, relative volume comparison \cite{PW1, Au1}, almost splitting and almost metric cone rigidity \cite{PW2, TZ, Ch3} etc. Recall that an $n$-manifold $M$  has integral Ricci curvature lower bound if there are constants $p>\frac{n}{2}, R>0, H$ such that
$$\bar k(H ,p, R)=\sup_{x\in M}\left(\frac1{\volume(B_R(x))}\int_{B_R(x)} \rho_H^p dv\right)^{\frac{1}{p}} $$%=\sup_{x\in M}\left(-\kern-1em\int_{B_R(x)} \rho_H^p dv\right)^{\frac{1}{p}}
has an upper bound, where $\rho_H=\max\{-\rho(x)+(n-1)H, 0\}$ and $\rho\left( x\right) $ is the smallest
eigenvalue for the Ricci tensor $\op{Ric} : T_xM\to T_xM$. And if $R=\op{diam}(M)$, we also denote $\bar k(H, p, R)=\bar k(H, p)$.

By \cite{CW}, for an $n$-manifold $(M^n, g)$ with $\op{diam}(M^n)\leq D$ and $\bar k(-1, p)$ small (depends on $n, D, p$),  $h(M^n,g)\le n-1+c(n, p, D)\bar k^{\frac12}(-1, p)$.  It is natural to ask that whether $M^n$ is close to a hyperbolic manifold if $h(M^n,g)$ approaches $n-1$ with integral Ricci curvature bound $\bar k(-1,p)\to 0$.

The second main result is the quantitative rigidity of almost maximal volume entropy for manifolds whose Ricci curvature almost $\ge -(n-1)$ in the $L^p$-integral sense as above.
\begin{Thm} \label{main}
	Given $n>1, D>0, p>\frac{n}{2}$, there exist $\delta(n, D, p), \epsilon(n, D, p)>0$, such that for $0< \delta<\delta(n, D, p)$, $0<\epsilon<\epsilon(n, D, p)$, if a compact $n$-manifold $X$ satisfies that
	$$\op{diam}(X)\leq D, \quad \bar k(-1, p)\leq \delta, \quad h(X)\geq n-1-\epsilon,$$
	then $X$ is diffeomorphic to a hyperbolic $n$-manifold by a $\Psi(\delta, \epsilon | n, D, p)$-isometry.
	
	Conversely, if an $n$-manifold $X$ with $\bar k(-1, p)\leq \delta(n, p, D)$ is $\epsilon$-Gromov-Hausdorff close to a compact hyperbolic $n$-manifold, then $|h(X)-(n-1)|\le \Psi(\epsilon |n, D)$.
	\end{Thm}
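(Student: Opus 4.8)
\noindent\emph{Proof proposal.}
The converse assertion is the soft one. Given $n$-manifolds $X_i$ with $\bar k(-1,p)\le\delta(n,p,D)$ that are $\epsilon_i$-Gromov--Hausdorff close to a fixed compact hyperbolic manifold $N=\mathbb H^n/\Gamma$ with $\epsilon_i\to 0$, I would pass to pointed equivariant limits of the universal covers $(\tilde X_i,\tilde x_i,\bar\pi_1(X_i))$ --- precompactness being guaranteed by the uniform local doubling coming from the relative volume comparison under an integral Ricci bound (\cite{PW1,Au1}) --- and identify the limiting group action with $(\mathbb H^n,\Gamma)$. Then the volume entropy does not drop in the limit, so $\liminf_i h(X_i)\ge h(N)=n-1$, while \cite{CW} gives $h(X_i)\le n-1+c(n,p,D)\bar k^{1/2}(-1,p)$; hence $h(X_i)\to n-1$, which is the claimed estimate.

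\medskip
For the forward (quantitative rigidity) direction I would argue by contradiction. Suppose the conclusion fails: there are $\eta_0>0$, sequences $\delta_i,\epsilon_i\to 0$, and compact $n$-manifolds $X_i$ with $\op{diam}(X_i)\le D$, $\bar k(-1,p)\le\delta_i$, $h(X_i)\ge n-1-\epsilon_i$, such that no diffeomorphism from $X_i$ onto a hyperbolic $n$-manifold is an $\eta_0$-isometry. After passing to a subsequence, $X_i\xrightarrow{mGH}(X_\infty,d_\infty,\nu_\infty)$ and, equivariantly, $(\tilde X_i,\tilde x_i,\bar\pi_1(X_i))\to(\tilde X_\infty,\tilde x_\infty,G)$ with $\tilde X_\infty/G=X_\infty$ and $\op{diam}(X_\infty)\le D$. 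Because $p>n/2$ and the integral defect $\bar k(-1,p)$ vanishes, the limit inherits the \emph{synthetic} lower Ricci bound: $(X_\infty,d_\infty,\nu_\infty)\in\RCD(-(n-1),n)$, and likewise $\tilde X_\infty$ with the lifted metric measure structure.

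\medskip
The heart of the argument is to show $h(X_\infty,d_\infty,\nu_\infty)=n-1$. The inequality ``$\le$'' is the $\RCD(-(n-1),n)$ volume-entropy bound. For ``$\ge$'' I would prove no drop of volume entropy: using the relative volume comparison for integral Ricci curvature on balls of $\tilde X_i$ up to scale $D$ together with the uniform control on the covering geometry, one gets uniform lower bounds $\tilde\nu_i(B_R(\tilde x_i))\ge e^{(n-1-\Psi(\delta_i,\epsilon_i|n,D,p))R-C}$ on a scale $R\to\infty$ as $i\to\infty$; passing these to the mGH limit of the measures yields $\tilde\nu_\infty(B_R(\tilde x_\infty))\ge e^{(n-1)R-C}$, hence $h(X_\infty)\ge n-1$. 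With $h(X_\infty)=n-1$, the maximal volume-entropy rigidity of \cite{CDNPSW} (equivalently, the already-proved Theorem~\ref{main-1}) shows that $(X_\infty,d_\infty,\nu_\infty)$ is isometric, as a metric measure space, to $(\mathbb H^n/\Gamma_0,c\mathcal H^n)$ for a cocompact lattice $\Gamma_0\subset\op{Iso}(\mathbb H^n)$, with $\tilde X_\infty=\mathbb H^n$ and $G\cong\Gamma_0$. Since $X_\infty=\mathbb H^n/\Gamma_0$ is a closed smooth manifold it is non-collapsed ($\mathcal H^n(B_1)\ge v_0>0$), so the $X_i$ are uniformly non-collapsed at a definite scale by the integral relative volume comparison; the $\epsilon$-regularity and $C^{1,\alpha}$-harmonic-radius estimates for manifolds with small $\bar k(-1,p)$ (\cite{PW2,TZ,Ch3}, of Petersen--Wei / Tian--Zhang / Dai--Wei--Zhang type) then upgrade the convergence $X_i\to X_\infty$ to $C^{1,\alpha}$-convergence up to diffeomorphisms. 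Hence for large $i$ there is a diffeomorphism $F_i:X_i\to X_\infty$ which is a $\Psi(\delta_i,\epsilon_i\,|\,n,D,p)$-isometry with $\Psi\to 0$, contradicting $\eta_0>0$ and the choice of the $X_i$. Unwinding the contradiction yields the quantitative statement.

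\medskip
The step I expect to be the main obstacle is the \emph{no-drop of volume entropy in the limit}. Unlike the smooth pointwise-Ricci case, Bishop--Gromov is not available on $\tilde X_i$, so one must run the integral relative volume comparison uniformly over all base points and all radii up to $D$, control the measures $\tilde\nu_i$ and the orbit counting for $\bar\pi_1(X_i)$ along the equivariant convergence, and ensure that the limit of the universal covers is again the universal cover of the limit --- a point which, as in \cite{CRX}, is closed only once the rigidity conclusion is in hand. Establishing the precise lower semicontinuity of $h$ under mGH and equivariant-GH convergence with vanishing integral Ricci defect, together with the $\RCD$ structure of the limit, is where most of the technical work lies.
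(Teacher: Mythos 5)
Your overall skeleton (contradiction argument, equivariant limits, identify the limit as a hyperbolic manifold, finish with the diffeomorphism stability of \cite{PW2}) matches the paper's, and your treatment of the converse direction is close in spirit to the paper's Theorem~\ref{con-pro-2} (though there, too, one first needs the diffeomorphism from Lemma~\ref{diffe-pro} and then compares displacements of deck transformations; ``entropy does not drop in the limit'' is not a general fact one can simply invoke). But the central step of the forward direction has a genuine gap.

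You propose to prove $h(X_\infty)\ge n-1$ by extracting uniform lower bounds $\tilde\nu_i(B_R(\tilde x_i))\ge e^{(n-1-\Psi)R-C}$ and passing them to the limit. This does not work: the hypothesis $h(X_i)\ge n-1-\epsilon_i$ is purely asymptotic in $R$, and together with the (almost monotone) relative volume comparison it yields no finite-scale lower bound with constants uniform in $i$ --- the scale at which the exponential growth rate is realized may run off to infinity with $i$. Indeed, by Theorem~\ref{uni-cov} the limit of the universal covers is $\mathbb{H}^k$ with possibly $k<n$, in which case $\tilde\nu_\infty(B_R)$ grows only like $e^{(k-1)R}$; volume genuinely escapes at finite scales in the collapsed situation, and ruling this out is exactly the content of the theorem. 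Relatedly, $\tilde X_\infty$ need not be the universal cover of $X_\infty$ and the limit group $G$ need not be discrete, so $h(X_\infty)$ cannot be read off from the orbit growth of $G$ on $\mathbb{H}^k$ without further input. The paper closes this gap by a group-theoretic argument your proposal does not contain: the Breuillard--Green--Tao Margulis lemma (Theorem~\ref{mar-lem}) produces a finite-index nilpotent normal subgroup $N_i\le\Gamma_i^{\epsilon}$ whose limit $N$ is a nilpotent normal subgroup of $G_0$ acting on $\mathbb{H}^k$; the key Theorem~\ref{prop-hyp} shows all elements of $N$ are elliptic with a common fixed point, whence $N_i$ and $\Gamma_i^{\epsilon}$ are finite; then $|\Gamma_i(R)|\le|\Gamma_i^{\epsilon}|\cdot|\hat\Gamma_i(R)|$ and Lemma~\ref{con-pro-1} on the quotient covers $\hat X_i=\tilde X_i/\Gamma_i^{\epsilon}$ give $n-1=\lim_i h(X_i)\le h_{\tilde x}(\mathbb{H}^k)=k-1$, forcing $k=n$ and $G_0=\{e\}$. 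Without an argument of this type (or a Margulis lemma with uniform index bound, which is unavailable in this setting), your ``no-drop'' step fails and the proof does not go through.
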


Theorem \ref{main} was proved in \cite{Ch2} under the non-collapsing condition, i.e., there is $v>0$ such that $\volume(X)\geq v$.

Since, based on some preliminaries, Theorems \ref{main} and \ref{main-1} will follow from the same arguments, we will only present the detailed proof of Theorem \ref{main}. It is based on the approach of the quantitative rigidity of almost maximal volume entropy for manifolds with lower Ricci curvature bound in \cite{CRX}. In the following we will first recall what have been done for the case of manifolds with lower bounded Ricci curvature, and then point out what is necessarily required in proving Theorems \ref{main} and \ref{main-1}.

Recall in \cite{CRX}, a sequence of $n$-manifolds $X_i$ with 
$$\op{Ric}_{X_i}\geq -(n-1), \quad \op{diam}(X_i)\leq D, \quad h(X_i)\to n-1,$$ was considered, which admits the following equivariant Gromov-Hausdorff convergence:
\begin{equation}\begin{array}[c]{ccc}
(\tilde X_i,\tilde x_i,\Gamma_i)&\xrightarrow{GH}&(\tilde X,\tilde x,G)\\
\downarrow\scriptstyle{\pi_i}&&\downarrow\scriptstyle{\pi}\\
(X_i,x_i)&\xrightarrow{GH} &(X, x), \label{first}
\end{array} \end{equation}
where $\tilde X_i$ is the Riemannian universal cover of $X_i$, and $\Gamma_i=\pi_1(X_i)$ is the fundamental group of $X_i$ acting isometrically on $\tilde X_i$ as deck-transformations. 
By an observation of Liu \cite{Liu} and a generalized version of Cheeger-Colding's ``almost volume cone implies almost metric cone'' (\cite{CRX}, cf. \cite{CC1}), it was shown in \cite{CRX} that $\tilde X_i$ has an almost warped product structure $\mathbb R\times_{e^r} Y$. Then by the  property of the warped product function $e^r$, it is easy to see that, as $r\to \infty$, any ball of a fixed radius centered at $(r,y)$ in $\mathbb R\times_{e^r} Y$ approaches a ball at $(0,y^*)$ in $\mathbb R \times_{e^r} T_yY$, where $T_yY$ is a tangent cone of $y$ in $Y$ and $y^*$ is its vertex. By taking $y$ to be a regular point (i.e., $T_yY=\mathbb R^{k-1}$ for some integer $k\ge 1$), and by the co-compactness of actions by $G$, the warped product $\mathbb R\times_{e^r}\mathbb R^{k-1}$ can be dragged to a fixed point in $\tilde X$. Thus $\tilde X=\mathbb R\times_{e^r}\mathbb R^{k-1}=\mathbb H^k$, where $k\leq n$. 

Let us observe that, if $k=n$ and $G$ acts freely, then the limit space $X$ of $X_i$ is a hyperbolic $n$-manifold.  Then by Cheeger-Colding \cite{CC2}, $X_i$ is diffeomorphic to $X$ and hence the first part of Theorem \ref{main} for manifolds with Ricci curvature $\ge -(n-1)$ is finished. 

In order to show that $k=n$ and $G$ acts freely and discretely, the generalized Margulis lemma by \cite{KW} plays an essential role in \cite{CRX}, which states that the subgroup generated by short loops in $\Gamma_i$ contains a nilpotent subgroup with nilpotent length (step) $\le n$ and index $\leq c(n)$. 

Indeed, by Colding-Naber \cite{CN}, the limit group $G$ is a finite-dimensional Lie group. Then by the generalized Margulis lemma, the connected component $G_0$ of the identity is a connected nilpotent Lie group acting on a hyperbolic space $\mathbb H^k$. On the other hand, it was proved in \cite{CRX} that, if $G_0$ is nilpotent and $\mathbb H^k/G$ is compact, then $G_0$ is either trivial or not nilpotent. Hence the subgroups of $\Gamma_i$ that converges to $G_0=\{e\}$ is finite, which enables one to show that $h(X_i)$ converges to the exponential volume growth rate $k-1$ at infinity on $\mathbb H^k$. Thus $k=n$. At the same time, the discreteness of $G$ implies that the convergence of $X_i$ to $X$ is non-collapsing, and thus by the Reifenberg condition on $\tilde X_i$ it can be seen that the action of $G$ is also free (see \cite[Theorem 2.1]{CRX}). Hence $X$ is isometric to a hyperbolic manifold $\mathbb{H}^n/\Gamma$. 

Note that, provided with recent developments recalled in \S 2 and a generalized Marguls lemma with a uniform index bound, the arguments above can be readily applied to more general spaces, including Alexandrov spaces, $\RCD(-(n-1),n)$-spaces, and manifolds with integral Ricci curvature lower bound.
For example, by the generalized Margulis lemma with a uniform index bound for Alexandrov spaces \cite{XY}, Theorem \ref{main-1} was proved in \cite{Ch2} for Alexandrov spaces by the same approach as above. 

For a sequence of manifolds with integral Ricci curvature lower bound  $\bar k(-1,p)\to 0$, or $\RCD(-(n-1), n)$-spaces, it was proved by the first-named author \cite{Ch1, Ch2} that  the limit space $\tilde X$ of universal covers $\tilde X_i$ is isometric to $\mathbb H^k$ with $1\le k\le n$.
However, the generalized Margulis lemma with \emph{a uniform index bound $C(n)$} is unknown at present for $\RCD(K,n)$-spaces and manifolds with an integral Ricci curvature lower bound, such that the arguments above for $G_0=\{e\}$ fails to apply. Hence, the difficulty is to exclude the collapsing of $\tilde X_i$ and $X$, i.e. to show that $k=n$ and $G$ acts discretely. 

In this note, we will apply a weaker version of the generalized Margulis lemma by \cite{BGT} (see Theorem \ref{mar-lem} below), whose nilpotent subgroup admits \emph{no uniform index bound}. Then $G_0$ would only contain a nilpotent subgroup $N$. We will show in Theorem \ref{prop-hyp} below that $N$ has a fixed point in $\mathbb H^k$, which also enables us to show the convergence of volume entropy such that $k=n$, and $G_0=\{e\}$. 
Thus Theorems \ref{main-1} and \ref{main} are reduced to the non-collapsing case, such that $\tilde X_i$ satisfies the Reifenberg condition, by which $G$ acts freely on $\tilde X$ (the no-collapsing case of Theorems \ref{main-1} and \ref{main} was already settled down in \cite{Ch2} and \cite{Ch3} respectively). 

The main technical Theorem \ref{prop-hyp} is a new result on the limit nilpotent group of deck-transformations, which is based on Besson-Courtois-Gallot \cite[\S 2]{BCG} for the properties of isometric actions on a hyperbolic space, and Chen-Rong-Xu \cite[\S 2]{CRX} for the limit of deck-transformation groups. It also yields a simplified proof in excluding the collapsing of manifolds with lower bounded Ricci curvature and almost maximal volume entropy, which is different from \cite{CRX}.

The authors would like to thank Xiaochun Rong for his helpful remarks in inspiring them to find some new ideas applied in the proof.
 
 \section{Preliminaries}
 In this section, we recall the results earlier known that will be applied in the proof of Theorem~\ref{main} and \ref{main-1}.
 
 \subsection{Manifolds with integral Ricci curvature lower bound}
 
 Given an $n$-manifold $M$, we say that $M$  has integral Ricci curvature lower bound, $(n-1)H$, if $\bar k(H, p, 1)$ has an upper bound, for $p>n/2$ and  $\bar k$ defined as in the introduction. Many properties for manifolds with lower Ricci curvature bound has been generalized to manifolds with integral Ricci curvature bound (see \cite{PW1, PW2, Au1, TZ, Ch3}). In particular, the Laplacian comparison (\cite{PW1, Au1}) and relative volume comparison (\cite{PW1, CW}) holds.  For a compact $n$-manifold $M$ with $\bar k(H, p)\leq c(n, p, \op{diam(M)})$, each of its normal cover, $\hat M$, also satisfies integral Ricci curvature lower bound \cite{Au2}.  And thus the set of manifolds with integral Ricci curvature bound (and their universal covers) is precompact:
\begin{Thm}[\cite{PW1, Au2} Precompactness] \label{compact}
For $n\geq 2, p>\frac{n}{2}, H$, there exists $c(n, p, H)$ such that if a sequence of  compact Riemannian $n$-manifold $M_i$ satisfies that 
$\op{diam}(M)^2\bar{k}_{M_i}(H, p)\leq c(p, n,  H)$, then  there are subsequences of $\{(M_i, x_i)\}$ and $\{(\tilde M_i, \tilde x_i)\}$ that converge in the pointed Gromov-Hausdorff topology where $\tilde M_i$ is the universal cover of $M_i$.
\end{Thm}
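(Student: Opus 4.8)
\emph{Proof proposal.} The plan is to deduce both statements from Gromov's precompactness criterion: a family of proper pointed metric spaces subconverges in the pointed Gromov--Hausdorff topology once it is \emph{uniformly totally bounded}, that is, for every $R>0$ and $\epsilon>0$ there is an integer $N=N(R,\epsilon)$, independent of the member of the family, that bounds the cardinality of every $\epsilon$-separated subset of every metric ball of radius $R$. So the whole task is to produce such a uniform $N$, first for the $M_i$ and then for the universal covers $\tilde M_i$, with base points $x_i\in M_i$ and $\tilde x_i\in\pi_i^{-1}(x_i)$ chosen once and for all.

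For the $M_i$ this is exactly what the integral-Ricci relative volume comparison of Petersen--Wei (\cite{PW1}, see also \cite{CW}) provides. Under the (scale-invariant) smallness hypothesis $\op{diam}(M_i)^2\,\bar k_{M_i}(H,p)\le c(n,p,H)$, that comparison yields, for all $x\in M_i$ and $0<r_1\le r_2\le\op{diam}(M_i)$, a Bishop--Gromov type monotonicity of $\volume(B_r(x))$ relative to the model-space ball volume, up to a multiplicative error controlled by $\bar k_{M_i}(H,p)$; in particular one gets a volume doubling inequality $\volume(B_{2r}(x))\le C(n,p,H)\,\volume(B_r(x))$ up to $r=\op{diam}(M_i)$. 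A doubling measure on a length space forces a packing bound of the form $N(R,\epsilon)\le C(n,p,H)^{1+\log_2(R/\epsilon)}$, so $\{(M_i,x_i)\}$ is uniformly totally bounded and, after passing to a subsequence, converges.

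It remains to handle the covers, and here the point to be careful about is that $\tilde M_i$ is in general noncompact, so its integral curvature norm is only meaningful, and only controlled, at a bounded scale. Since $\tilde M_i$ is locally isometric to $M_i$, the result of Aubry \cite{Au2} applies: every normal cover of a compact manifold with sufficiently small $\bar k_{M_i}(H,p)$ again carries an integral Ricci lower bound at a fixed scale, i.e.\ there are $r_0=r_0(n,p,H,D)>0$ and $H'=H'(n,p,H,D)$ with $\bar k_{\tilde M_i}(H',p,r_0)$ below the Petersen--Wei threshold at scale $r_0$. Consequently volume doubling holds on $\tilde M_i$ at the fixed scale $r_0$ with a uniform constant; iterating it (at the cost of letting $N$ depend on $R$ as well) produces packing bounds at every radius, so $\{(\tilde M_i,\tilde x_i)\}$ is uniformly totally bounded and hence subconverges. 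Refining to a common subsequence for $M_i$ and $\tilde M_i$ gives the theorem.

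The genuinely delicate ingredient is the step just invoked from \cite{Au2}: estimating $\int_{B_{r_0}(\tilde x)}\rho_{H'}^p$ over a ball in the cover cannot be done by merely pushing forward under $\pi_i$, since $\pi_i$ need not be injective on that ball; one must account for the overlap using the deck action and a packing of (pieces of) fundamental domains, and --- the point I expect to require the most care --- check that the constants line up, so that ``$\bar k_{M_i}(H,p)$ small'' indeed forces ``$\bar k_{\tilde M_i}(H',p,r_0)$ small enough at the fixed scale $r_0$''. Everything else is the standard Petersen--Wei plus Gromov package.
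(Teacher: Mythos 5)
Your proposal is correct and follows essentially the same route the paper relies on: the paper gives no proof of this theorem but cites exactly the two ingredients you use, namely the Petersen--Wei relative volume comparison (hence doubling and uniform packing bounds) for the $M_i$, and Aubry's transfer of the integral Ricci bound to normal covers at a fixed scale for the $\tilde M_i$, combined with Gromov's precompactness criterion. The subtlety you flag about non-injectivity of $\pi_i$ on balls in the cover is precisely the content of \cite{Au2}, so nothing is missing.
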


For a non-collapsing sequence of manifolds with integral Ricci curvature lower bound, that converges to a Riemannian manifold, the following diffeomorphic stability holds.  
\begin{Lem}[\cite{PW2}]\label{diffe-pro}
Given $n, p>\frac{n}{2}, H$, there is $\delta(n, p, H)>0$, such that if a sequence of compact $n$-manifolds $M_i$ is Gromov-Hausdorff convergent to a compact $n$-manifold $M$ with $\bar k_{M_i}(H, p, 1)\leq \delta(n, p, H)$, then for $i$ large, $M_i$ is diffeomorphic to $M$.
\end{Lem}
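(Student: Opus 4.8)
The plan is to follow the Cheeger--Colding approach to diffeomorphic stability \cite{CC2}, now carried out in the integral-curvature setting with the analytic tools recalled in \S 2: one first promotes the Gromov--Hausdorff convergence to $C^{1,\alpha}$-convergence of the Riemannian metrics, and then produces a diffeomorphism by patching harmonic coordinates.

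First I would check that the convergence is non-collapsing. Since $M$ is a compact smooth $n$-manifold, $\volume(M)>0$, and the volume-continuity theorem for manifolds with $\bar k(H,p)$ small (Petersen--Wei \cite{PW2}, the integral analogue of Colding's volume convergence) gives $\volume(M_i)\to\volume(M)$; combined with the relative volume comparison \cite{PW1, CW} this forces a uniform bound $\volume(B_r(x_i))\ge v(n,H)\,\volume(M)\,r^n$ for all $x_i\in M_i$ and all $r\le 1$, once $i$ is large. Moreover, because $M$ is smooth, for every $\eta>0$ each point of $M$ has a small geodesic ball that, after rescaling to radius $1$, is $\eta$-Gromov--Hausdorff close to the unit Euclidean $n$-ball; transporting this through the Gromov--Hausdorff approximations shows the same holds, at a fixed scale, for every ball in $M_i$ with $i$ large.

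The core step, which I expect to be the main obstacle, is an $\epsilon$-regularity estimate: there is $\eta=\eta(n,p,H)>0$ such that any geodesic ball $B_r(x)$ in an $n$-manifold with $r^2\bar k(H,p,r)\le\delta$ and $\volume(B_r(x))\ge v r^n$, which is $\eta r$-Gromov--Hausdorff close to the Euclidean ball of radius $r$, has $C^{1,\alpha}$-harmonic radius at least $c(n,p,H,v)\,r$. One proves this in parallel with the pointwise-Ricci case: control the local Sobolev constant along the sequence, solve for harmonic functions approximating the Euclidean coordinate functions by means of the almost-splitting theorem with integral Ricci bounds \cite{PW2, TZ, Ch3}, and run a De Giorgi--Nash--Moser iteration to bound the metric coefficients in these coordinates in $C^{1,\alpha}$ (indeed $W^{2,p}$). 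The genuinely new difficulty compared with the pointwise setting is that no Bochner inequality is available pointwise, so the integral curvature error must be carried through both the Sobolev estimate and the iteration -- this is precisely the analytic heart of \cite{PW2}. Granting the estimate and applying it at a fixed scale to every point of $M_i$ (legitimate by the previous step), the sequence $(M_i,g_i)$ is precompact in the $C^{1,\alpha}$-topology; its limit is a $C^{1,\alpha}$ Riemannian metric $g_\infty$, and since the underlying metric space is the Gromov--Hausdorff limit, which is the smooth manifold $M$, one gets $g_i\to g_\infty$ in $C^{1,\alpha}$ with $g_\infty$ a $C^{1,\alpha}$ metric on $M$.

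Finally I would pass from $C^{1,\alpha}$-closeness to a diffeomorphism by the standard patching argument (Cheeger--Colding \cite{CC2}; see also \cite{PW2}): fix a finite atlas of $g_\infty$-harmonic coordinate charts on $M$ with controlled overlaps; for $i$ large the corresponding $g_i$-harmonic charts exist on $M_i$, with transition maps $C^{2,\alpha}$-close to those on $M$; then glue the local near-identity coordinate maps, using a partition of unity subordinate to the cover together with a $g_\infty$-center-of-mass construction, into a global map $F_i\colon M_i\to M$ whose differential is uniformly close to an isometry. By the inverse function theorem $F_i$ is a local diffeomorphism, and a degree/covering argument shows it is a diffeomorphism once $i$ is large.
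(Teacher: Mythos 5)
This lemma is not proved in the paper at all; it is quoted verbatim from Petersen--Wei \cite{PW2}, so the comparison here is between your reconstruction and the actual argument of \cite{PW2} (which rests on Cheeger--Colding \cite{CC2}, Appendix A). Your preliminary steps are fine: volume convergence under small $\bar k(H,p)$ plus relative volume comparison \cite{PW1} does give uniform non-collapsing, and smoothness of $M$ does propagate, through the Gromov--Hausdorff approximations, to the statement that every ball of $M_i$ at a fixed scale is, after rescaling, $\eta$-close to the Euclidean ball. The problem is exactly the step you flag as the ``core step''.

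The claimed $\epsilon$-regularity -- that a ball with small $r^2\bar k(H,p,r)$, almost-Euclidean volume, and small Gromov--Hausdorff distance to the Euclidean ball has a definite $C^{1,\alpha}$-harmonic radius -- is false, because $\bar k(H,p)$ only controls the \emph{negative} part of the Ricci curvature (recall $\rho_H=\max\{-\rho+(n-1)H,0\}$); the positive part is completely unconstrained, and Anderson-type harmonic radius estimates require two-sided control. A concrete counterexample: a two-dimensional cone of angle $2\pi-\epsilon'$ smoothed at scale $s\to 0$ has curvature $\ge 0$ (so $\bar k(0,p)=0$), unit ball $O(\epsilon')$-close to the Euclidean disc with almost maximal volume, yet its $C^{1,\alpha}$-harmonic radius at the tip tends to $0$ (otherwise the metrics would subconverge in $C^{1,\alpha'}$ to the exact cone, whose metric is not $C^1$ at the vertex). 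Consequently there is no $C^{1,\alpha}$-precompactness and the harmonic-coordinate patching at the end has nothing to patch. The correct route, and the one \cite{PW2} actually takes, replaces your core step by the \emph{intrinsic Reifenberg} mechanism of \cite[Appendix A]{CC2}: relative volume comparison for integral bounds propagates the almost-maximal-volume condition down to all smaller scales, almost volume rigidity (the integral-curvature version of Colding's theorem, proved in \cite{PW2}) converts this into Gromov--Hausdorff closeness to Euclidean balls at \emph{every} scale and point, and the intrinsic Reifenberg theorem then yields, for large $i$, bi-H\"older (not $C^{1,\alpha}$) charts and a map $M_i\to M$ which is shown to be a diffeomorphism by the smoothing and degree argument of \cite[Theorem A.1.12]{CC2}. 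So the statement is true, but your proposal as written would not establish it.
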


In \cite{CW}, it was proved that for a compact $n$-manifold $M$ with $\op{diam}(M)\leq D$, there is $\delta(n, p, D)>0$
such that if $\bar k(-1, p)\leq \delta<\delta(n, p, D)$, then the volume entropy 
$$h(M)\leq n-1- c(n, p, D)\delta^{\frac12}.$$
The quantitative rigidity of almost maximal volume entropy is proved in \cite{Ch2} for the non-collapsing case:

\begin{Thm}[\cite{Ch2}]\label{uni-cov}
Given $n, D, p>\frac{n}{2}$, there exist $\delta(n, D,  p), \epsilon(n, D,  p)>0$, such that for $0< \delta<\delta(n, D,  p)$, $0<\epsilon<\epsilon(n, D, p)$, if a compact $n$-manifold $M$ satisfies that
$$\op{diam}(M)\leq D, \quad \bar k(-1, p)\leq \delta, \quad h(M)\geq n-1-\epsilon,$$
then $\tilde M$ is $\Psi(\delta, \epsilon | n, D, p)$-Gromov-Hausdorff close to a simply connected hyperbolic space form $\Bbb H^k$, $k\leq n$. 

If in addition, there is $v>0$ such that $\volume(M)\geq v$, then $M$ is diffeomorphic and $\Psi(\epsilon, \delta | n, p, D, v)$-Gromov-Hausdorff close to a hyperbolic manifold where $\epsilon, \delta$ may depends on $v$.
\end{Thm}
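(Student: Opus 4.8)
The plan is to argue by contradiction and compactness. Suppose the first conclusion fails. Then there are $\epsilon_0>0$ and compact $n$-manifolds $M_i$ with $\op{diam}(M_i)\le D$, $\bar k_{M_i}(-1,p)\to 0$ and $h(M_i)\to n-1$, for which $\tilde M_i$ is not $\epsilon_0$-Gromov--Hausdorff close to any $\mathbb{H}^k$, $k\le n$. Since a normal cover of a manifold with a small integral Ricci bound again carries such a bound (\cite{Au2}), Theorem~\ref{compact} applies to both $\{M_i\}$ and $\{\tilde M_i\}$; together with equivariant Gromov--Hausdorff precompactness we may pass to a subsequence realizing
\[
\begin{array}{ccc}
(\tilde M_i,\tilde x_i,\Gamma_i)&\xrightarrow{GH}&(\tilde X,\tilde x,G)\\
\downarrow & & \downarrow\\
(M_i,x_i)&\xrightarrow{GH}&(X,x),
\end{array}
\]
with $\Gamma_i=\pi_1(M_i)$, $G$ a closed isometry group of $\tilde X$, $X=\tilde X/G$, and $\op{diam}(X)\le D$.

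The heart of the matter is to show that the limit $\tilde X$ is isometric to a simply connected hyperbolic space form $\mathbb{H}^k$ with $1\le k\le n$. Here $h(M_i)\to n-1$ is precisely the maximal value allowed by the integral-Ricci volume comparison of \cite{CW}; following the observation of Liu \cite{Liu}, this maximality forces the relative volume ratio of $\volume(B_R(\tilde x_i))$ to the volume of an $R$-ball in $\mathbb{H}^n$ to be almost constant in $R$ throughout the range of scales fixed by $\op{diam}(M_i)\le D$, uniformly once the estimate is transported by the co-compact $\Gamma_i$-action. The almost-rigidity of the relative volume comparison, together with the integral-curvature version of the Cheeger--Colding principle ``almost volume cone implies almost metric cone'' (\cite{TZ}, \cite{Ch3}, cf.\ \cite{CC1}), then shows that $\tilde X$ carries a warped product structure $\mathbb{R}\times_{e^r}Y$. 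Because, as $r\to\infty$, a ball of fixed radius around $(r,y)$ converges to a ball around the vertex of $\mathbb{R}\times_{e^r}T_yY$, choosing $y$ regular so that $T_yY=\mathbb{R}^{k-1}$ and using the co-compactness of $G$ to drag the model $\mathbb{R}\times_{e^r}\mathbb{R}^{k-1}=\mathbb{H}^k$ to a fixed basepoint identifies $\tilde X$ with $\mathbb{H}^k$, $1\le k\le n$; this step is carried out in \cite{Ch1, Ch2}. In particular, for $i$ large $\tilde M_i$ is $\epsilon_0$-close to $\mathbb{H}^k$, contradicting the choice of $\{M_i\}$; tracking the dependence of the error terms and arguing by the standard contradiction/compactness scheme upgrades this to the quantitative bound $\Psi(\delta,\epsilon\mid n,D,p)$, which is the first conclusion.

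For the second conclusion one assumes in addition $\volume(M_i)\ge v>0$, so the convergences are non-collapsing and $X$ has Hausdorff dimension $n$. Since $X=\tilde X/G=\mathbb{H}^k/G$ with $k\le n$, this forces $k=n$ and $G$ discrete; moreover the Reifenberg regularity enjoyed by $\tilde M_i$ in the non-collapsed integral-Ricci setting (\cite{PW2}, \cite{TZ}, \cite{Ch3}) passes to $\tilde X=\mathbb{H}^n$ and shows $G$ acts freely, as in \cite[Theorem~2.1]{CRX}, while $X$ is then a smooth Riemannian $n$-manifold of constant curvature $-1$, i.e.\ a compact hyperbolic $n$-manifold $\mathbb{H}^n/G$. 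The diffeomorphic stability Lemma~\ref{diffe-pro} now gives $M_i$ diffeomorphic to $X$ for $i$ large, with the asserted Gromov--Hausdorff closeness; this contradicts the failure hypothesis, and the contradiction argument yields the quantitative statement with $\epsilon,\delta$ allowed to depend on $v$. The main obstacle is the heart of the argument above: producing the warped-product/metric-cone structure of $\tilde X$ from the almost-maximal entropy condition under only an $L^p$ integral Ricci bound. The pointwise Laplacian comparison and the segment inequality underlying the Cheeger--Colding excess estimates must be replaced by their integral counterparts controlled by $\bar k(-1,p)$, and one must verify that all resulting errors are bounded by some $\Psi(\delta\mid n,D,p)$ --- this is exactly what the integral-Ricci machinery of \cite{TZ,Ch3,CW} provides, and is the reason the admissible threshold $\delta$ depends on $n,D,p$.
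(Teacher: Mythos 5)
This theorem is quoted from \cite{Ch2} and the paper gives no proof of its own beyond the citation, but the strategy it summarizes in the introduction --- precompactness of $M_i$ and $\tilde M_i$ via \cite{Au2}, Liu's observation turning almost maximal entropy into an almost volume annulus/cone, the integral-Ricci version of ``almost volume cone implies almost metric cone'' to get the warped product $\mathbb{R}\times_{e^r}Y$ and hence $\tilde X=\mathbb{H}^k$, and in the non-collapsed case $k=n$, $G$ discrete and free by the Reifenberg condition plus the stability lemma of \cite{PW2} --- is exactly the route your proposal takes. Your outline is correct and essentially identical to the cited argument, with the hard analytic steps appropriately delegated to the same references.
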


\subsection{Recent developments about $\RCD(K, n)$-spaces}

For a metric measure space $(X, d, \nu)$, we always assume that the geodesic space $(X, d)$ is complete, separable and locally compact and $\nu$ is a non-negative Radon measure with respect to $d$ and finite on bounded sets. To keep a short presentation, we will skip the definition of $\RCD(K, n)$-spaces here. We refer reader to the survey \cite{Am} for an overview of the definitions and bibliography of $\RCD(K, n)$-spaces. 

Roughly speaking a $\RCD(K, n)$-space is a metric measure space with Ricci curvature bounded below by $K$, dimension bounded above by $n$ and a generalized ``Riemannian structure''.  Any $n$-manifold with $\op{Ric}\geq (n-1)H$, endowed with its length metric and volume is a $\RCD((n-1)H, n)$-space, and the set of $\RCD((n-1)H, n)$-spaces with normalized measure is compact in the measured Gromov-Hausdorff topology  (see \cite[Theorem 7.2]{GMS} and \cite[Theorem 6.11]{AGS}). Thus
any renormalized measured Gromov-Hausdorff limit space $(X, d, \nu)$ of a sequence of $n$-manifolds $(M_i, g_i, \frac{\volume}{\volume(B_1(x_i))})$ with $\op{Ric}_{M_i}\geq (n-1)H$ is a $\RCD((n-1)H, n)$-space. More generally, any $n$-dimensional Alexandrov space with curvature bounded below by $\kappa$ is a $\RCD((n-1)\kappa, n)$-space (\cite{Pe, ZZ, AGS}). A measured Gromov-Hausdorff limit space of a sequence of $n$-manifolds $M_i$ with $\bar k_{M_i}(H, p, 1)\to 0$ is also a $\RCD((n-1)H, n)$-space (\cite{Ke, Ch3}).

Similar to Lemma \ref{diffe-pro}, the following homeomorphic stability holds.
\begin{Lem}[\cite{KM}] \label{home-pro}
	Assume $(M, g)$ is a compact $n$-manifold and $(X_i, d_i, \nu_i)$ is measured Gromov-Hausdorff convergent to $(M, g, \nu)$ with $(X_i, d_i, \nu_i)\in \RCD(K, n)$. Then for $i$ large, $X_i$ is homeomorphic to $M$ by a $\epsilon_i$-Gromov-Hausdorff approximation, $\epsilon_i\to 0$. In particular, $X_i$ is a topological manifold for $i$ large and there is a sequence of positive numbers $c_i\to c$ such that $\nu=c\mathcal H^n$, $\nu_i=c_i\mathcal H^n$.
\end{Lem}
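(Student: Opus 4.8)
\emph{Proof strategy.} The plan is to run the Cheeger--Colding scheme for non-collapsed Ricci limits, now available in the $\RCD$ category. First, the convergence is non-collapsed: since $(M,g)$ is a smooth compact $n$-manifold, $\mathcal H^n=\volume_g$, and $\nu$ has full support, so $\nu(B_1(x))>0$, whence $\nu_i(B_1(x_i))\to\nu(B_1(x))>0$ along $x_i\to x$; a Bishop--Gromov argument then gives a uniform bound $\nu_i(B_r(y))\ge v_0 r^n$ for all $y\in X_i$, $r\le 1$ and $i$ large. Next, I would identify the limit measure. Every point of the smooth manifold $M$ has $\mathbb R^n$ as unique tangent cone, so the essential dimension of $(M,d_g,\nu)$ equals the nominal bound $n$; by the structure theory of (weakly) non-collapsed $\RCD(K,n)$ spaces, $\nu$ is mutually absolutely continuous with $\mathcal H^n=\volume_g$, say $\nu=e^{-V}\volume_g$. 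The $\RCD(K,n)$ condition on a smooth $n$-manifold then forces $V$ to be constant, since the $N$-Bakry--\'Emery tensor with $N=n=\dim M$ contains the term $-\tfrac{1}{N-\dim M}\,dV\otimes dV$, which is finite only if $\nabla V\equiv 0$. Hence $\nu=c\,\mathcal H^n$ for some $c>0$.

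Next I would show each $X_i$ is a topological $n$-manifold for $i$ large. For a fixed small radius $r$ and $i$ large, every ball $B_r(y)\subset X_i$ is Gromov--Hausdorff close to a ball in $M$, hence, by smoothness of $M$, $\Psi(r)$-close to the Euclidean ball of radius $r$, while $\nu_i(B_r(y))\ge v_0r^n$. The $\epsilon$-regularity theorem for $\RCD(K,n)$ spaces then makes these balls Reifenberg regular at all scales, and the intrinsic Reifenberg method --- implemented via $\delta$-splitting/harmonic maps from the second-order calculus on $\RCD$ spaces --- produces bi-H\"older homeomorphisms from small neighbourhoods of points of $X_i$ onto Euclidean balls, with H\"older exponent close to $1$. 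In particular $X_i$ is a topological $n$-manifold all of whose tangent cones are $\mathbb R^n$; so it is weakly non-collapsed, $\nu_i=c_i\mathcal H^n$ for some $c_i>0$ by the same $\RCD$ structure theory, and $\nu_i(B_1(x_i))\to\nu(B_1(x))$ forces $c_i\to c$.

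Finally I would upgrade the given approximation to a homeomorphism. Fix $\epsilon_i$-GH maps $f_i:X_i\to M$ with $\epsilon_i\to 0$. Cover $M$ by finitely many smooth coordinate balls and pull these back under $f_i$ to a cover of $X_i$ by the Reifenberg charts above; on overlaps the transitions are $\Psi(\epsilon_i)$-close to the identity, so a Reifenberg-type patching of the local charts (a partition-of-unity/center-of-mass construction as in Cheeger--Colding) glues the local maps into a continuous $\bar f_i:X_i\to M$ which is a local homeomorphism and remains $\Psi(\epsilon_i)$-close to $f_i$. Since $X_i$ and $M$ are closed connected $n$-manifolds and $\bar f_i$ is close to the $\epsilon_i$-surjection $f_i$, a degree argument shows $\bar f_i$ has degree $\pm 1$ and is injective, hence a homeomorphism, and by construction a $\Psi(\epsilon_i)$-GH approximation.

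The crux is this last step: passing from the merely local bi-H\"older structure that Reifenberg delivers in the non-smooth setting to a \emph{global} homeomorphism that is simultaneously controlled in the Gromov--Hausdorff sense --- one cannot smooth globally as in the manifold case, so the charts must be patched consistently and the map handled by topological (degree/covering) arguments valid for topological manifolds. The sharp rigidity $\nu=c\,\mathcal H^n$ and the $\epsilon$-regularity theorem for $\RCD(K,n)$ spaces are the other essential inputs, both by now standard in the $\RCD$ literature.
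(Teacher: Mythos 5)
The paper offers no proof of this lemma: it is imported verbatim from Kapovitch--Mondino \cite{KM}, so there is no internal argument to compare against. Your sketch is, in outline, a faithful reconstruction of the proof in \cite{KM}: non-collapsing from $\nu_i(B_1(x_i))\to\nu(B_1(x))>0$ plus Bishop--Gromov; the Bakry--\'Emery computation on the smooth limit forcing $\nu=c\,\volume_g$ (correct: $\RCD(K,n)$ on an $n$-manifold with $\nu=e^{-V}\volume_g$ forces $dV=0$); $\epsilon$-regularity propagating Euclidean closeness to all points and scales; and the intrinsic Reifenberg theorem to produce a global bi-H\"older homeomorphism that is simultaneously a Gromov--Hausdorff approximation. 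Two remarks. First, the final patching/degree step you describe need not be re-derived: it is exactly Cheeger--Colding's intrinsic Reifenberg theorem (\cite{CC1}, Appendix A), which is a purely metric statement and is what \cite{KM} invoke once the Reifenberg condition is verified; what is genuinely nontrivial in the $\RCD$ setting is the $\epsilon$-regularity that feeds into it, not the patching. Second, the step ``all tangent cones of $X_i$ are $\mathbb R^n$, hence $X_i$ is weakly non-collapsed, hence $\nu_i=c_i\mathcal H^n$ by the same structure theory'' is not the same elementary computation as on the smooth limit: it appeals to (a case of) the De Philippis--Gigli conjecture that weakly non-collapsed $\RCD(K,n)$ spaces have $\nu=c\mathcal H^n$, which for compact spaces is a theorem of Honda and is precisely the ingredient used in \cite{KM}; it should be cited as such rather than subsumed under ``structure theory''. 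With those two external inputs made explicit, the outline is correct and coincides with the approach of the cited source.
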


For a connected length space $(X, d)$, a connected covering space $\tilde{\pi}: (\tilde X, \tilde d)\to (X, d)$ is called a universal cover of $(X, d)$ if for any other covering $\pi: (Y, d')\to (X, d)$, there is a covering map $f: \tilde X\to Y$, such that $\pi\circ f=\tilde{\pi}$. Any universal cover is regular and any two universal covers of $(X,d)$ are equivalent to each other (\cite{Sp}). In \cite{MW}, Mondino-Wei showed that any $\RCD(K, N)$-space has a universal cover.
\begin{Thm}[\cite{MW}] \label{uni-exi}
If a metric measure space $(X, d, \nu)\in \RCD(K, n)$, $K\in \Bbb R, n\geq 1$, then $(X, d, \nu)$ has a universal cover space $(\tilde X, \tilde d, \tilde \nu)\in \RCD(K, n)$.
\end{Thm}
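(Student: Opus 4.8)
The strategy is to invoke the $\delta$-cover machinery of Sormani-Wei, in its metric measure incarnation. For a length space $(X,d)$ and $\delta>0$, let $\tilde X^{\delta}\to X$ denote the $\delta$-cover, that is, the regular covering associated with the normal subgroup of $\pi_1(X,x)$ generated by the classes of loops lying in some metric ball of radius $\delta$; for $\delta'<\delta$ there is a natural intermediate covering $\tilde X^{\delta'}\to\tilde X^{\delta}$. By the Sormani-Wei criterion, $(X,d)$ admits a universal cover if and only if this inverse system of $\delta$-covers \emph{stabilizes}, i.e.\ there is $\delta_0>0$ with $\tilde X^{\delta}=\tilde X^{\delta_0}$ for all $0<\delta\le\delta_0$, in which case the universal cover is $\tilde X^{\delta_0}$. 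The theorem therefore splits into (i) proving this stabilization when $(X,d)$ underlies an $\RCD(K,n)$-space, and (ii) transporting the $\RCD(K,n)$ structure to the resulting cover.

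Step (ii), which is also needed inside Step (i), asserts that any connected covering $\pi\colon\hat X\to X$ of an $\RCD(K,n)$-space, equipped with the lifted length metric $\hat d$ and the measure $\hat\nu$ obtained by locally pulling back $\nu$, is again $\RCD(K,n)$. Here one uses that both defining ingredients of $\RCD(K,n)$, the curvature-dimension condition $\mathrm{CD}(K,n)$ (equivalently the dimensional Bochner inequality) and infinitesimal Hilbertianity, are local in nature, while a covering map is a local isometry under which the reference measure is locally preserved; the Sobolev-to-Lipschitz property and essential non-branching likewise pass to $(\hat X,\hat d,\hat\nu)$. In particular every such cover obeys the Bishop-Gromov volume comparison with constants depending only on $K,n$. (As manifolds with $\op{Ric}\ge(n-1)H$ and finite-dimensional Alexandrov spaces are $\RCD$-spaces, this is exactly the input needed for the covers $\tilde X_i$ and $\tilde M_i$ appearing later.)

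The real work is Step (i), and the quantitative engine is Bishop-Gromov on the covers $\tilde X^{\delta}$. After shrinking $\delta$ by a universal factor, distinct deck transformations of $\tilde X^{\delta}\to X$ displace a fixed lift $\tilde x_0$ of the basepoint by at least $\delta$ — a shorter displacement would be realized by a loop at $\pi(\tilde x_0)$ short enough to sit inside a $\delta$-ball, hence trivial in the deck group — so Bishop-Gromov on $\tilde X^{\delta}$ together with Gromov's short-generator argument bounds the number of generators of displacement $\le 2\operatorname{diam}(X)$, albeit by a quantity degenerating as $\delta\downarrow0$. Stabilization then amounts to excluding a tower $\tilde X^{\delta_i}$, $\delta_i\downarrow0$, of strictly growing covers, equivalently to ruling out loops of length tending to $0$ that remain essential at their own scale; this is the step I expect to be the main obstacle. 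One attacks it by locating an accumulation point $p$ of such loops, rescaling $X$ and the relevant covers (which stay $\RCD$, with curvature bound $K\delta_i^2\to0$) by $\delta_i^{-1}$ about $p$, and passing to a pointed limit via the renormalized measured Gromov-Hausdorff precompactness of $\RCD$-spaces, with Bishop-Gromov preventing volume collapse; one then seeks a contradiction from the structure of the limit $\RCD(0,n)$-space — Euclideanity of tangent cones at regular points, together with a finer control of the behavior at singular points.

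Granting the stabilization, $\tilde X:=\tilde X^{\delta_0}$ is the universal cover of $(X,d)$ by the Sormani-Wei criterion; it is regular and unique up to equivalence by the general theory of covering spaces, and equipping it with $\tilde d$ and the lifted measure $\tilde\nu$ and invoking Step (ii) yields $(\tilde X,\tilde d,\tilde\nu)\in\RCD(K,n)$, as asserted.
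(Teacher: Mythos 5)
The paper offers no proof of this statement---it is quoted from Mondino--Wei [MW]---so your proposal has to be measured against that source. You have correctly identified the framework [MW] actually uses: realize the universal cover as a stabilized Sormani--Wei $\delta$-cover, and show that connected covers of an $\RCD(K,n)$-space, with the lifted length metric and the locally pulled-back measure, are again $\RCD(K,n)$. Your Step (ii) is right in outline, but the phrase ``local in nature'' conceals a genuinely deep input: the curvature-dimension condition is not a pointwise condition, and upgrading a locally verified condition on the cover to a global one requires a local-to-global theorem (available for $\RCD^*(K,N)$ via Erbar--Kuwada--Sturm/Ambrosio--Mondino--Savar\'e, and for $\RCD(K,N)$ only after Cavalletti--Milman). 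This is precisely why [MW] work in the $\RCD^*$ setting.

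The genuine gap is in Step (i): the stabilization of the $\delta$-covers is the entire content of the theorem, and you do not prove it. You write that one ``seeks a contradiction from the structure of the limit $\RCD(0,n)$-space --- Euclideanity of tangent cones at regular points, together with a finer control of the behavior at singular points''; that is a plan, not an argument, and as stated it does not close. The accumulation point of the putative short essential loops need not be regular, and at singular points tangent cones of $\RCD$-spaces carry too little structure for Euclideanity to be of use. Worse, even at a regular point the blow-up does not ``see'' the loops: a loop of length $\ell_i$ based at distance $d_i$ from the blow-up point with $\ell_i/d_i\to 0$ simply disappears in the pointed rescaled limit, so Euclideanity of the tangent cone by itself forbids nothing. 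What replaces this in Sormani--Wei and in [MW] is a quantitative ``uniform cut'' estimate: a geodesic loop of length $L$ that survives to the $\delta$-cover at its own scale forces, via Bishop--Gromov \emph{on the cover} (which is why Step (ii) must come first) and the halfway lemma, a definite relative volume bound at scale $L$; iterating this over a putative infinite tower of distinct covers produces the contradiction. Your Bishop--Gromov count of generators with displacement $\le 2\op{diam}(X)$ degenerates as $\delta\downarrow 0$, as you concede, and so contributes nothing to stabilization. In short: the reduction to the Sormani--Wei criterion and the inheritance of the $\RCD$ condition are set up correctly, but the decisive quantitative step is missing and the substitute you sketch would not supply it.
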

In above theorem, one can take $\tilde d, \tilde \nu$ such that $\tilde \pi: \tilde X\to X$ is distance and measure non-increasing and is a local isometry.  The revised fundamental group $\bar \pi_1(X)$ of $X$ is defined to be the deck-transformation group of $\tilde \pi: \tilde X\to X$, which acts on $(\tilde X, \tilde d)$ isometrically and preserves the measure $\tilde \nu$ (see \cite{MW}).  For each $\alpha\in \bar \pi_1(X)$, there is $\gamma\in \pi_1(X, x)$ which induces by curve-lifting a deck-transformation $\Phi(\gamma)$ on $\tilde X$, such that $\alpha=\Phi(\gamma)$, and thus $\Phi: \pi_1(X, x)\to \bar \pi_1(X)$ is a surjective homomorphism.

Similar to Theorem~\ref{uni-cov},  $\RCD(-(n-1), n)$-spaces with an almost maximal volume entropy are known to admit the following properties.
\begin{Thm}[\cite{Ch1}]\label{uni-cov-1}
Given $n>1, D$, there exist $\epsilon(n, D)>0$, such that for $0<\epsilon<\epsilon(n, D)$, if a compact $\RCD(-(n-1), n)$-space $(X, d, \nu)$ satisfies that
$$\op{diam}(X)\leq D,  \quad h(X, d, \nu)\geq n-1-\epsilon,$$
then $\tilde X$ is $\Psi(\epsilon | n, D)$-Gromov-Hausdorff close to $\Bbb H^k$, $k\leq n$.

If in addition, $X$ is non-collapsed, i.e., there is $v>0$ such that $\nu(X)=\mathcal{H}^n(X)\geq v$, then $X$ is $\Psi(\epsilon | n, D, v)$-Gromov-Hausdorff close and homeomorphic to a hyperbolic manifold.
\end{Thm}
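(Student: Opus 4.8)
I would argue by contradiction, running the equivariant-convergence scheme of \S1 in the synthetic setting. Suppose the first assertion fails; then there are $(X_i,d_i,\nu_i)\in\RCD(-(n-1),n)$ with $\op{diam}(X_i)\le D$ and $h(X_i,d_i,\nu_i)\ge n-1-\tfrac1i$ whose universal covers $(\tilde X_i,\tilde d_i,\tilde\nu_i)$---which exist and again lie in $\RCD(-(n-1),n)$ by Theorem~\ref{uni-exi}---are not $\Psi(\tfrac1i\,|\,n,D)$-Gromov--Hausdorff close to any $\mathbb H^k$ with $k\le n$. Using precompactness of $\RCD(-(n-1),n)$-spaces and of their normal covers, after passing to a subsequence and renormalizing the measures I would pass to the equivariant pointed measured Gromov--Hausdorff limit
\[
\begin{array}[c]{ccc}
(\tilde X_i,\tilde x_i,\Gamma_i)&\xrightarrow{mGH}&(\tilde X,\tilde x,G)\\
\downarrow\scriptstyle{\pi_i}&&\downarrow\scriptstyle{\pi}\\
(X_i,x_i)&\xrightarrow{mGH} &(X,x),
\end{array}
\]
where $\Gamma_i=\bar\pi_1(X_i)$ acts by deck transformations, $(\tilde X,\tilde d,\tilde\nu)\in\RCD(-(n-1),n)$, $G$ is a closed group of measure-preserving isometries of $\tilde X$, and $X=\tilde X/G$ is compact with $\op{diam}(X)\le D$. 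It then suffices to show $\tilde X$ is isometric to some $\mathbb H^k$, $k\le n$, contradicting the choice of the $X_i$.

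The core step is to turn $h(X_i,d_i,\nu_i)\to n-1$ into a rigid structure on $\tilde X$. By the Bishop--Gromov-type volume comparison valid on $\RCD(-(n-1),n)$-spaces, the ratio $R\mapsto\tilde\nu_i(B_R(\tilde x_i))\big/\int_0^R\sinh^{n-1}t\,dt$ is non-increasing; since $n-1$ is the maximal possible exponential growth rate and it is almost attained, this ratio must be almost constant over arbitrarily long ranges of $R$, which is an ``almost volume annulus'' condition in the curvature $-1$ sense. Following Liu's observation and the generalized ``almost volume cone implies almost metric cone'' for $\RCD$-spaces (the quantitative splitting, maximal-diameter and cone-rigidity theory now available synthetically), one produces on $\tilde X_i$ an almost-solution of the Obata-type system $\Hess(e^{b})=e^{b}\,\g$ with $b$ a Busemann function of a ray, and passing to the limit obtains on $\tilde X$ an honest positive $u$ with $\Hess u=u\,\g$. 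By the warped-product rigidity on $\RCD(K,n)$-spaces this forces $\tilde X$ to be isometric to a warped product $\mathbb R\times_{e^r}Y$ for some $Y\in\RCD(0,n-1)$.

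The final step drags the warped structure to infinity, exactly as in \cite[\S2]{CRX}: in $\mathbb R\times_{e^r}Y$ a ball of fixed radius centred at $(s,y)$ converges, as $s\to\infty$, to a ball at the vertex of $\mathbb R\times_{e^r}T_yY$; choosing $y$ a regular point of $Y$ (so $T_yY=\mathbb R^{k-1}$ for some $1\le k\le n$, possible since $\nu$-a.e.\ point of $Y$ is regular) yields a ball in $\tilde X$ arbitrarily close to one in $\mathbb H^k=\mathbb R\times_{e^r}\mathbb R^{k-1}$, and the co-compactness of the $G$-action transplants this model to a neighbourhood of $\tilde x$; combined with the warped-product rigidity one concludes $\tilde X=\mathbb H^k$, $k\le n$. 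The step I expect to be the principal obstacle is precisely the passage from almost-maximal volume entropy---a statement \emph{at infinity} on the non-compact $\tilde X$---to the equation $\Hess u=u\,\g$ on the limit: it requires the full second-order calculus on $\RCD$-spaces, the quantitative splitting theorem and the ``volume cone implies metric cone'' rigidity in the synthetic category, plus care with the non-properness of the entropy estimate.

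For the non-collapsed addendum, if $\mathcal H^n(X)=\nu(X)\ge v>0$ then the convergences $X_i\to X$ and $\tilde X_i\to\tilde X$ are non-collapsing, so $\tilde X$ is an $n$-dimensional non-collapsed $\RCD(-(n-1),n)$-space; hence $k=n$ and $\tilde X=\mathbb H^n$ above. Non-collapsing also forces $G$ discrete, since a positive-dimensional identity component $G_0$ acting on $\mathbb H^n$ would make $X=\mathbb H^n/G$ of Hausdorff dimension $<n$, and the Reifenberg regularity of the non-collapsed $\tilde X_i$ together with \cite[Theorem~2.1]{CRX} shows the limiting $G$-action on $\mathbb H^n$ is free. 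Thus $X=\mathbb H^n/\Gamma$ is a compact hyperbolic manifold carrying its Riemannian volume, so up to normalization $\nu=\mathcal H^n$, and Lemma~\ref{home-pro} then gives that $X_i$ is homeomorphic to $\mathbb H^n/\Gamma$ and $\Psi(\tfrac1i\,|\,n,D,v)$-close to it for large $i$, contradicting the failure of the addendum.
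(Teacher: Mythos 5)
The paper does not prove this statement itself---it is quoted from \cite{Ch1} as a preliminary---but your sketch reproduces exactly the strategy of that reference as summarized in the paper's introduction: equivariant limits of universal covers, Liu's observation plus the almost-volume-annulus/almost-warped-product rigidity to get $\mathbb R\times_{e^r}Y$, dragging to a regular point at infinity by cocompactness to identify $\tilde X$ with $\mathbb H^k$, and, in the non-collapsed case, dimension counting plus the Reifenberg/freeness argument of \cite[Theorem 2.1]{CRX} and the topological stability of Lemma~\ref{home-pro}. The outline is correct and consistent with the cited proof, with the heavy lifting (the synthetic ``almost volume cone implies almost metric cone'' in the warped-product, curvature $-1$ setting) correctly identified as the main technical input.
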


\subsection{Generalized Margulis lemma}
A metric space $X$ is said to have bounded packing constant $K>0$, if each ball of radius $4$ in $X$ can be covered by at most $K$ balls of radius 1. By the relative volume comparison \cite{PW1} and \cite{LV, St1, St2}, any manifold with $\bar k(H, p, 1)$ small (depends on $n, p, H$) and  any $\RCD(-(n-1)H, n)$-space have a bounded packing constant $K(H, n)$.

Gromov asked in \cite[\S5.F]{Gr1} that whether finite elements $\gamma_1,\dots, \gamma_q$ in the isometry group of $X$ with a bounded packing constant generate a virtually nilpotent group, if the displacement of $\{\gamma_1,\dots,\gamma_q\}$ is sufficient small at a point $x\in X$? 

This was answered by Breuillard-Green-Tao \cite[Corollary 1.15]{BGT}:  

\begin{Thm}[Generalized Margulis lemma \cite{BGT}]  \label{mar-lem}
	Given $K\geq 1$, there is $\epsilon(K)>0$ such that for a metric space $X$ with packing constant $K$ and an isometric group $\Gamma$ acting discretely on $X$, elements with displacement $\le \epsilon(K)$ generate a virtually nilpotent subgroup, i.e., $$\Gamma_{\epsilon}(x)=\left<\gamma\in \Gamma\,| \, d(x, \gamma(x))<\epsilon(K)\right>$$ contains a finite index nilpotent subgroup with step $\leq C(K)$.
\end{Thm}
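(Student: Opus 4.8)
\medskip

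The plan is to reduce the statement to the Breuillard--Green--Tao classification of finite approximate groups. Fix $x\in X$ and a threshold $\epsilon=\epsilon(K)>0$ to be chosen small in terms of $K$, and for $r>0$ set $\Sigma_r=\{\gamma\in\Gamma:d(x,\gamma x)\le r\}$, a symmetric set containing $e$ with $\Sigma_r\Sigma_s\subseteq\Sigma_{r+s}$. A packing bound makes bounded subsets of the complete space $X$ totally bounded, so $X$ is proper; since $\Gamma$ acts discretely, each $\Sigma_R$ is then finite, the stabilizer $\Gamma_x\subseteq\Sigma_r$ is finite, the orbit $\Gamma x$ is uniformly discrete, and $\Gamma_\epsilon(x)=\langle\Sigma_\epsilon\rangle$ is finitely generated.

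The first step, which carries the geometric content, is to exhibit some power $A=\Sigma_\epsilon^{m}$ as a finite $K'(K)$-approximate group with $\langle A\rangle=\Gamma_\epsilon(x)$. Here one uses that for $1\le i\lesssim\epsilon^{-1}$ the orbit of $\Sigma_\epsilon^{i}$ stays in a ball about $x$ of bounded radius --- the scale at which the packing hypothesis bites --- and a scale-comparison argument over the roughly $\log(1/\epsilon)$ dyadic scales in $[1,c\epsilon^{-1}]$ then produces, once $\epsilon$ is small enough in terms of $K$, a scale $m$ at which $\Sigma_\epsilon^{2m}$ is covered by $K'(K)$ left translates of $\Sigma_\epsilon^{m}$; since $e\in\Sigma_\epsilon$, the set $A:=\Sigma_\epsilon^{m}$ is then an approximate group with $\langle A\rangle=\langle\Sigma_\epsilon\rangle=\Gamma_\epsilon(x)$. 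The second step feeds $A$ into the structure theorem: $A$ is covered by $C(K)$ left translates of a coset nilprogression $P$ of rank and step at most $C(K)$, with $P$ in turn contained in a bounded power of $A$; a routine group-theoretic argument then extracts inside $\langle A\rangle=\Gamma_\epsilon(x)$ a finite-index subgroup that is nilpotent of step $\le C(K)$, which is the assertion.

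The principal obstacle is the structure theorem for approximate groups itself, which rests on the Gleason--Yamabe solution of Hilbert's fifth problem and on Hrushovski's ultraproduct methods; no elementary argument produces a \emph{bounded} nilpotency step. The genuinely delicate geometric point is the first step: a packing bound imposed at a single scale is not scale-invariant --- for instance $\mathbb{H}^2$ satisfies it yet has exponential volume growth --- so one cannot merely rescale or bootstrap it, and the smallness of $\epsilon(K)$ must be used to create a long run of scales along which the doubling of $\Sigma_\epsilon^{i}$ stays controlled, with every comparison kept at the packing scale; in the intended applications to manifolds with integral Ricci bounds and to $\RCD$-spaces the packing bound in fact holds at all scales below the diameter, which streamlines this step. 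One must also accommodate non-free, non-cocompact discrete actions via finiteness of stabilizers and uniform discreteness of the orbit, and verify that the approximate group constructed generates exactly $\Gamma_\epsilon(x)$. A softer but only qualitative alternative is a contradiction argument: pass to an equivariant Gromov--Hausdorff (or ultra-) limit of hypothetical counterexamples, observe that the limit group fixes the base point and therefore embeds in a locally compact group with compact point-stabilizer, and apply the structure theory of locally compact groups --- but this yields only the existence of some bounded step, again via Gleason--Yamabe.
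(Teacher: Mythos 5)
The paper offers no proof of this statement: it is quoted verbatim as \cite[Corollary~1.15]{BGT} (with the uniform index bound deliberately dropped), and the deep input --- the Breuillard--Green--Tao structure theorem for approximate groups via Gleason--Yamabe and Hrushovski --- is exactly the black box your sketch also invokes. Your outline (pigeonhole over $\log(1/\epsilon)$ scales inside the unit ball to find a power $\Sigma_\epsilon^m$ with bounded doubling, pass to an approximate group using finiteness of the stabilizer, then apply the nilprogression structure theorem) is precisely the derivation given in the cited source, so it is correct and takes the same route as the paper's reference.
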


It follows from Theorem \ref{mar-lem} that elements of deck-transformations on the universal cover of a $\RCD(-(n-1),n)$-space with displacement $\le  \epsilon(n)$ generates a virtually nilpotent subgroup. 
It happens similarly for compact Riemannian $n$-manifolds with integral Ricci curvature bound $\bar k(-1,p) <\delta(n, p,D)$ and diameter $\le D$.
 
Note that Theorem \ref{mar-lem} is weaker than the traditional generalized Margulis lemma, which was proved for $n$-manifolds with $\Ric\ge-(n-1)$ in \cite{KW} (cf, \cite{FY}, \cite{KPT}), such that the nilpotent subgroup has step $\le n$ and index $\le C(n)$. 

\subsection{Equivariant Gromov-Hausdorff convergence}  In this subsection, we generalize a theorem on the equivariant Gromov-Hausdorff convergence associated to the universal cover by Fukaya-Yamaguchi \cite{FY, FY1}. 
We refer to \cite{FY} for basis properties of equivariant Gromov-Hausdorff topology.

\begin{Thm}[\cite{FY,FY1}]\label{equ-str}
Assume there is a communicate diagram for length metric spaces $(X_i, d_i)$,  $(X, d)$,  $(\tilde X_i,\tilde d_i)$, $(\tilde X, \tilde d)$ with isometric actions by $\Gamma_i$, $G$ respectively: 
\begin{equation}\begin{array}[c]{ccc}
(\tilde X_i,\tilde p_i,\Gamma_i)&\xrightarrow{GH}&(\tilde X,\tilde p,G)\\
\downarrow\scriptstyle{\pi_i}&&\downarrow\scriptstyle{\pi}\\
(X_i,p_i)&\xrightarrow{GH} &(X, p).
\end{array} \end{equation}
If (1) $G_0$ is a normal subgroup of $G$ such that $G/G_0$ is discrete; (2) $\tilde X_i$ is the universal cover of $X_i$; (3) $\Gamma_i$ is the deck-transformations of $\tilde X_i$;  (4) for some $R_0>0$, $G_0$ is generated by $G_0(R_0)=\{g\in G_0\,|\, d(g(\tilde p),\tilde p)\le R_0\}$;  (5) $\tilde X/G$ is compact.
 
Then there are normal subgroups $\Gamma'_i\subset \Gamma_i$ such that 

{\rm (i)} $\Gamma'_i$ is generated by $\Gamma'_i(R_0+\epsilon_i)$, where $\Gamma'_i(R_0+\epsilon_i)$ consists of elements of $\Gamma_i(R_0+\epsilon_i)$ that are $\epsilon_i$-close to $G_0(R_0)$ in the equivariant Gromov-Hausdorff topology, and $\epsilon_i\to 0$;

{\rm (ii)} $(\tilde X_i,  \tilde p_i, \Gamma'_i)$ is equivariant Gromov-Hausdorff convergent to  $(\tilde X, \tilde p, G_0)$, which implies that
$$(\hat X_i=\tilde X_i/\Gamma_i', \hat p_i, \hat \Gamma_i=\Gamma_i/\Gamma_i')\overset{GH}{\longrightarrow}(\hat X=\tilde X/G_0, \hat p, \hat G=G/G_0).$$

{\rm (iii)} for $i$ large, $\hat\Gamma_i$ is isometric to $\hat G$.
\end{Thm}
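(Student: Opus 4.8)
The proof adapts Fukaya-Yamaguchi \cite{FY,FY1}, where $G_0$ is the identity component of $G$, to the present more general hypotheses. Two consequences of the hypotheses will be used throughout. First, since $G/G_0$ is discrete, $G_0$ is open in $G$, hence also closed; in particular, for every $R>0$ the compact set $\{g\in G:d(g\tilde p,\tilde p)\le R\}$ splits into the disjoint compact pieces $G_0(R)$ and $\{g\notin G_0:d(g\tilde p,\tilde p)\le R\}$, which are therefore separated by some $\rho_R>0$ at some scale $L_R$. Second, since $\tilde X_i$ is the universal cover of $X_i$ and $X_i=\tilde X_i/\Gamma_i$ converges to $X=\tilde X/G$, which is compact by hypothesis (5), the group $\Gamma_i=\pi_1(X_i)$ is generated by deck transformations of displacement at $\tilde p_i$ bounded (by roughly $2\operatorname{diam}(X_i)$, which stays bounded).

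First I would fix a sequence $\epsilon_i\to 0$ chosen suitably (large enough for the approximation arguments below, yet smaller than $\rho_R$ once $i$ is large, for each $R$), define $\Gamma'_i(R_0+\epsilon_i)$ as the set of $\gamma\in\Gamma_i$ with $d_i(\gamma\tilde p_i,\tilde p_i)\le R_0+\epsilon_i$ that are $\epsilon_i$-close to some element of $G_0(R_0)$, and set $\Gamma'_i=\langle\Gamma'_i(R_0+\epsilon_i)\rangle$; this gives the displacement and generation parts of (i) at once. For the normality in (i) it suffices to check $aba^{-1}\in\Gamma'_i$ when $a$ is a (bounded-displacement) generator of $\Gamma_i$ and $b\in\Gamma'_i(R_0+\epsilon_i)$. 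Passing to a subsequence, $a\to g$ and $b\to h\in G_0(R_0)$, so $aba^{-1}\to ghg^{-1}\in G_0$ by normality of $G_0$ in $G$; by hypothesis (4) one writes $ghg^{-1}=c_1\cdots c_l$ with all $c_k\in G_0(R_0)$ and $l$ bounded (a covering argument on the compact set of possible $ghg^{-1}$), lifts each $c_k$ to $c_{i,k}\in\Gamma'_i(R_0+\epsilon_i)$ with $c_{i,k}\to c_k$, and notes that $\delta_i:=(c_{i,1}\cdots c_{i,l})^{-1}aba^{-1}$ has displacement tending to $0$, hence $\delta_i\in\Gamma'_i(R_0+\epsilon_i)\subseteq\Gamma'_i$ for $i$ large (any deck transformation of displacement $\le\epsilon_i$ is $\epsilon_i$-close to $e\in G_0(R_0)$); thus $aba^{-1}=c_{i,1}\cdots c_{i,l}\,\delta_i\in\Gamma'_i$.

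Next I would establish the equivariant convergence in (ii). Along a subsequence, $(\tilde X_i,\tilde p_i,\Gamma'_i)$ converges to $(\tilde X,\tilde p,G')$ for a closed subgroup with $G_0\subseteq G'\subseteq G$, the inclusion $G_0\subseteq G'$ holding because every $h\in G_0(R_0)$ is a limit of elements of $\Gamma'_i(R_0+\epsilon_i)\subseteq\Gamma'_i$ and $G_0=\langle G_0(R_0)\rangle$. The crux of the theorem, and the step I expect to be the main obstacle, is the reverse inclusion $G'\subseteq G_0$: one must prevent a product of an a priori unbounded number of short deck transformations, each merely close to $G_0(R_0)$, from converging to an isometry outside $G_0$ through accumulation of errors. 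Here I would argue as in \cite{FY,FY1}: the separation of $G_0(R)$ from the rest of $\{g:d(g\tilde p,\tilde p)\le R\}$ by $\rho_R$ at scale $L_R$, together with hypothesis (4) (which lets one reorganize a word in $\Gamma'_i(R_0+\epsilon_i)$ so as to control how it shadows $G_0$) and the discreteness of $G/G_0$, forces every $\gamma\in\Gamma'_i$ of displacement $\le R$ to stay on the $G_0$-side for $i$ large, so its equivariant limit lies in $G_0(R)\subseteq G_0$; as this applies to every subsequence, $\Gamma'_i\to G_0$. The convergence of quotients $(\hat X_i,\hat p_i,\hat\Gamma_i)\to(\tilde X/G_0,\hat p,G/G_0)$ in (ii) is then the standard passage from equivariant convergence of the pair to convergence of the quotient.

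Finally, for (iii) the key point is that $\hat\Gamma_i=\Gamma_i/\Gamma'_i$ is discrete "in the limit": if $\hat\gamma_i\in\hat\Gamma_i$ has displacement at $\hat p_i$ tending to $0$, then lifting to $\gamma_i\in\Gamma_i$ and picking $\gamma'_i\in\Gamma'_i$ with $d_i(\gamma'_i\gamma_i\tilde p_i,\tilde p_i)$ small, the element $\gamma'_i\gamma_i$ has displacement tending to $0$, hence lies in $\Gamma'_i(R_0+\epsilon_i)\subseteq\Gamma'_i$, so $\gamma_i\in\Gamma'_i$ and $\hat\gamma_i$ is trivial; moreover $\hat\Gamma_i$ acts freely on $\hat X_i$ because $\Gamma_i$ acts freely on $\tilde X_i$. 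Given this, and that $\hat G=G/G_0$ is discrete with $\hat X/\hat G=X$ compact (hence finitely generated, with generators and defining relations detected at a bounded scale), the standard Fukaya-Yamaguchi argument of lifting generators and relations through $\hat\Gamma_i\to\hat G$ and matching orbit cardinalities in bounded balls yields an isomorphism $\hat\Gamma_i\cong\hat G$ for $i$ large. The only genuinely delicate point is the inclusion $G'\subseteq G_0$; the remaining steps are bookkeeping over the equivariant Gromov-Hausdorff formalism and the cited precompactness results.
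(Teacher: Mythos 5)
The first thing to note is that the paper does not reprove this theorem: its entire ``proof'' is the observation that the argument of \cite[Theorem 4.2]{FY1} (detailed in \cite[Theorem 3.1]{Ch1}) goes through when hypothesis (2') ``$\tilde X_i$ is simply connected'' is weakened to (2) ``$\tilde X_i$ is the universal cover'', the point being that the cover $\hat X_i$ built by gluing the pseudo-covers $V_i(R)=B_R(\tilde p_i)/\Gamma'_i(3R)$ via the pseudo-group $\Lambda_i(R)=\Gamma_i(R)/\Gamma'_i(3R)$ is still a middle cover of $X_i$, and deck-transformations still correspond to loop lifting. Your proposal never engages with this, which is the only genuinely new content here relative to \cite{FY,FY1}; instead you rebuild the whole statement by a direct route in which $\hat X_i:=\tilde X_i/\Gamma'_i$ is a quotient rather than a glued object. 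That route is legitimate in principle, and your treatment of normality of $\Gamma'_i$ and of part (iii) is reasonable bookkeeping, but it cannot then borrow the hard steps from \cite{FY,FY1} wholesale, because their argument is organized around the pseudo-cover gluing at a fixed scale $R>10\operatorname{diam}(X_i)$, not around the subgroup $\Gamma'_i$ and an induction of the kind you sketch.

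The genuine gap is exactly at the step you yourself flag as the crux, the inclusion $G'\subseteq G_0$. The mechanism you propose --- separation of $G_0(R)$ from $\{g\notin G_0: d(g\tilde p,\tilde p)\le R\}$ by $\rho_R$ at scale $L_R$, plus ``reorganizing the word'' using hypothesis (4), forcing every $\gamma\in\Gamma'_i$ of displacement $\le R$ to ``stay on the $G_0$-side'' --- does not close. An element of $\Gamma'_i$ of displacement $\le R$ is a word of a priori unbounded length in $\Gamma'_i(R_0+\epsilon_i)$, and its partial products have a priori unbounded displacement; the separation constant $\rho_R$ and the equivariant Gromov--Hausdorff error are only controlled on fixed bounded sets, so the ``error resets at each step because the nearest group element must lie in $G_0$'' induction cannot be run uniformly along the word. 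Hypothesis (4) (that $G_0=\langle G_0(R_0)\rangle$) gives no means of rewriting the word so that partial products stay bounded: that would require a diameter bound on $\tilde X_i/\Gamma'_i$, which is not available (it is essentially equivalent to what is being proved, since $\hat X_i$ is compact only when $[\Gamma_i:\Gamma'_i]<\infty$). This is precisely the difficulty the pseudo-group construction of \cite[Appendix]{FY} is designed to circumvent, by working entirely at the fixed scale $R$ and recovering the global cover from glued local data. So either you must carry out that construction (and then address the (2) versus (2') issue the paper highlights), or you must supply an actual argument for $G'\subseteq G_0$ in your direct framework; as written, the central step is asserted, not proved.
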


If $G$ is a Lie group, then $G_0$ in Theorem \ref{equ-str} can be chosen to be the identity component, which is a normal subgroup of $G$. And then $\Gamma_i'=\Gamma_i^\epsilon$, where $\Gamma_i^\epsilon=\left<\gamma\in \Gamma_i\,|\, d(\gamma_i(x),x)\le \epsilon, \forall x\in B_{2R_0}(\tilde p_i)\right>$, and $\epsilon$ is a fixed small positive constant determined by the gap between $G_0$ and other cosets of $G_0$.

Theorem \ref{equ-str} follows the proof of  \cite[Theorem 4.2]{FY1} (for more details see \cite[Theorem 3.1]{Ch1}), by the following observation.

Note that, instead of (2), it was assumed in \cite[Theorem 4.2]{FY1} that 

(2') the universal cover $\tilde X_i$ is simply connected.

We point it out that the proof of \cite[Theorem 4.2]{FY1} still goes through under (2), where the key point is to construct a middle cover of $X_i$.
$$(\hat X_i, \hat p_i, \hat \Gamma_i)\to (\hat X, \hat p, \hat G),$$
such that for $i$ large $\hat \Gamma_i$ is isometric to $\hat G=G/G_0$ and $\pi_1(\hat X_i)=\Gamma'_i$ converges to $G_0$ in the equivariant Gromov-Hausdorff topology.

In \cite{FY,FY1}, based on the pointed equivariant Gromov-Hausdorff convergence $(\tilde X_i,\tilde p_i,\Gamma_i)\to (\tilde X,\tilde p,G)$, a cover $\hat X_i$ of $X_i$ was constructed by gluing the pseudo-cover $V_i(R)=B_R(\tilde p_i)/\Gamma'_i(3R)$ via a group extension of the pseudo-group $\Lambda_i(R)=\Gamma_i(R)/\Gamma'_i(3R)$, where by condition (5) $\op{diam}(X_i)$ is uniformly bounded, and $R>10\op{diam}(X_i)$ is a large but fixed constant such that $V_i(R)$ is a pseudo-cover of $X_i$ (for details of the gluing and group extension, see \cite[Appendix A]{FY}). Since $\tilde X_i$ is the universal cover of $X_i$, the cover $\hat X_i$ constructed by gluing $V_i(R)$ is a middle cover. Since the relation among deck-transformations on $\tilde X_i$, $\hat X_i$ and $\pi_1(X_i,p_i)$ are by lifting of loops, the remaining proof of Theorem \ref{equ-str} are the same as that in \cite[Theorem 4.2]{FY1}, where $\tilde X_i$ is simply connected.

For the case that $(X_i, d_i, \nu_i)\in \RCD(K, n)$, as already considered in \cite[Theorem 3.1]{Ch1}, the universal cover $(\tilde X_i, \tilde d_i, \tilde \nu_i)$ exists (Theorem \ref{uni-exi}). By \cite{GR, So}, $G$ is a Lie group. Hence $G_0$ can be chosen to be its identity component, which is generated by $G_0(\epsilon)$ for some $\epsilon>0$. However, the fact that the isometry group of a $\RCD(K,n)$-space is a Lie group is not used in proving Theorems \ref{main-1} and \ref{main}, due to the fact that $\tilde X=\mathbb{H}^k$ in our proof.

\section{Proofs of the main results}

In this section, we will give the proofs of Theorem~\ref{main} and \ref{main-1}.  
%By the discussion in the introduction, it suffices to show that in \eqref{first}, $\tilde X=\Bbb H^n$ and $G$ acts discretely on $\tilde X$. 
It has a direct relation to the continuity problem of volume entropy.

\subsection{Continuity of volume entropy} 
In general, volume entropy is not continuous under Gromov-Hausdorff convergence. 
\begin{Example}
Let $M^3=T^2\times [0,1]/\phi$, $\phi=\left(\begin{matrix} 1 & 1
\\ 1& 2\end{matrix}\right): T^2\to T^2$. Then the fundamental group $\pi_1(M^3)$ is not nilpotent but solvable. Given any $T^2$-invariant metric $g$, which
splits at each point as $g=g_{T^2}\oplus ds^2$. For any $\epsilon>0$, let
$g_\epsilon=\epsilon^2g|_{T^2}\oplus ds^2$. Then $(M^3,g_\epsilon)\overset{GH}\to(S^1,ds)$ such that the sectional curvature
$|\op{sec}_{g_\epsilon}|\le C$ (a constant). By \v{S}varc-Milnor lemma (\cite{Sva}, \cite{Mi}), $h(M^3, g_{\epsilon})$ is bounded below by the word-length entropy of $\pi_1(M^3)$ multiple a constant $1/(2\op{diam}(M^3,g_\epsilon)+1)$ (\cite{CRX1}). For a solvable but non-nilpotent group, its word-length entropy admits a positive lower bound (\cite[Lemma 3.1]{Os}, depending on the maximal norm of $\phi$'s eigenvalue). It follows that
$h(M^3,g_\epsilon)\ge c>0$, i.e., $h(M^3,g_\epsilon)\nrightarrow h(S^1,ds)=0$. 

\end{Example}
 In \cite{CRX}, for a sequence of $n$-manifolds $M_i$ with a negative lower Ricci curvature bound, it was showed that when $M_i$ converges to a smooth $n$ manifold or when the volume entropy of $M_i$ approaches to its maximal value, the volume entropy is continuous (see \cite[Theorem 0.5 and Theorem 4.6]{CRX}).  
 %In this subsection, as \cite[Theorem 0.5]{CRX}, we will first show that if a compact n-dimensional manifold with integral Ricci curvature bound or a compact $\RCD(K, n)$-space is measured Gromov-Hausdorff close to a smooth $n$-manifold, then the volume entropy also close. More generally, we have that

We generalize \cite[Theorem 0.5]{CRX} to the following version. First recall that given a length space $(X, d)$, its $\delta$-cover, $\delta>0$, is defined as a covering space $X_{\delta}$ of $X$ with covering group $\pi_1(X, \delta, x)$ where $\pi_1(X, \delta, x)$ is a normal subgroup of $\pi_1(X, x)$ generated by homotopy classes of closed paths having representative of the form $\alpha^{-1}\beta\alpha$ where $\beta$ is a closed path lying in a ball of radius $\delta$ and $\alpha$ is a path from $x$ to $\beta(0)$.  By \cite[Proposition 3.2]{SW}, if a compact length space $(X, d)$ has a universal cover $\tilde X$, then $\tilde X$ is a $\delta$-cover and for any $0<\delta'<\delta$, $\tilde X=X_{\delta}=X_{\delta'}$.
\begin{Thm} \label{con-pro-2}
Assume two compact metric measure spaces $(X, d, \nu)$ and $(Y, d', \mu)$ are homeomorphic and $\epsilon$-Gromov-Hausdorff close and assume that $(Y, d', \mu)$ has a $\delta$-cover as its universal cover, $\delta>4\epsilon$. Then
$$\left|\frac{h(X, d, \nu)}{h(Y, d',\mu)}-1\right|\leq \Psi(\epsilon | \delta).$$
\end{Thm}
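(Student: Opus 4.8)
The plan is to reduce the inequality to a comparison of the exponential growth rates of the deck‑transformation groups $\bar\pi_1(X)$ and $\bar\pi_1(Y)$ acting on the respective universal covers, and then to compare these two actions directly by a discretization argument. First, since $\tilde Y$ is a $\delta$-cover, \cite[Proposition 3.2]{SW} gives $Y_{\delta'}=\tilde Y$ for all $0<\delta'\le\delta$, so the covering subgroup $K_Y:=\pi_1(Y,\delta',y)\trianglelefteq\pi_1(Y)$ is the same for every $\delta'\in(0,\delta]$ and $\bar\pi_1(Y)=\pi_1(Y)/K_Y$. Because $X$ is homeomorphic to $Y$ it also has a universal cover, and, the universal cover being a topological invariant, a homeomorphism $f\colon X\to Y$ realizing the $\epsilon$-closeness lifts to an equivariant homeomorphism $\tilde f\colon\tilde X\to\tilde Y$ with $f_*(K_X)=K_Y$, where $K_X\trianglelefteq\pi_1(X)$ is the covering subgroup of $\tilde X\to X$. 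Now $f$, and likewise $f^{-1}$, changes distances — hence diameters of subsets — by at most $\epsilon$, so it carries a loop contained in a ball of radius $\rho$ to a loop contained in a ball of radius $\rho+\epsilon$; applying this to $f$ with $\rho=\delta-\epsilon$ and to $f^{-1}$ with $\rho=\delta-2\epsilon$ (all positive because $\delta>4\epsilon$) gives $f_*(\pi_1(X,\delta-\epsilon,x))\subseteq K_Y$ and $f_*^{-1}(K_Y)\subseteq\pi_1(X,\delta-\epsilon,x)$, hence $f_*^{-1}(K_Y)=\pi_1(X,\delta-\epsilon,x)$. Comparing with $f_*^{-1}(K_Y)=K_X$ yields $K_X=\pi_1(X,\delta-\epsilon,x)$, i.e.\ $\tilde X=X_{\delta-\epsilon}$, and $f_*$ descends to an isomorphism $\bar f_*\colon\bar\pi_1(X)\to\bar\pi_1(Y)$. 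Two consequences will be used: any loop contained in a ball of radius $\le\delta-\epsilon$ in $X$ (resp.\ of radius $\le\delta$ in $Y$) is null‑homotopic in $\tilde X$ (resp.\ $\tilde Y$); and, the deck‑actions being free, every nontrivial element of $\bar\pi_1(X)$ moves each point of $\tilde X$ by $\ge2(\delta-\epsilon)$ (and similarly for $Y$).

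Next I would pass to counting. Set $N_X(R):=\#\{g\in\bar\pi_1(X)\colon\tilde d(\tilde x,g\tilde x)\le R\}$ and define $N_Y$ likewise. Covering $B_R(\tilde x)\subseteq\tilde X$ by the balls of radius $\op{diam}(X)$ about the orbit $\bar\pi_1(X)\tilde x$ gives $\tilde\nu(B_R(\tilde x))\le N_X(R+\op{diam}(X))\cdot\tilde\nu(B_{\op{diam}(X)}(\tilde x))$, while the disjointness of $\{B_{(\delta-\epsilon)/2}(g\tilde x)\}_{g\in\bar\pi_1(X)}$ provided by the positive ``systole'' above gives $\tilde\nu(B_R(\tilde x))\ge N_X(R-\tfrac{\delta-\epsilon}{2})\cdot\tilde\nu(B_{(\delta-\epsilon)/2}(\tilde x))$, both constants being finite and positive once $\tilde x$ lies over a point of $\op{supp}\nu$. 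Since the $R$-shifts are bounded, the exponential growth rates of $\tilde\nu(B_R(\tilde x))$ and of $N_X(R)$ coincide, so $h(X,d,\nu)=\lim_{R\to\infty}R^{-1}\ln N_X(R)$ and correspondingly for $Y$ — the metric‑space form of Manning's theorem (cf.\ \cite{Ma, CRX}) — and it remains to compare the growths of $N_X$ and $N_Y$.

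The core estimate is that for every $g\in\bar\pi_1(X)$, with $\tilde y:=\tilde f(\tilde x)$,
$$
\tilde d'\big(\tilde y,\bar f_*(g)\tilde y\big)\ \le\ \Big(1+\tfrac{8\epsilon}{\delta}\Big)\,\tilde d(\tilde x,g\tilde x)+\epsilon ,
$$
together with the symmetric inequality obtained by interchanging $X$ and $Y$. To prove the display, pick a curve $\tilde\gamma$ in $\tilde X$ from $\tilde x$ to $g\tilde x$ of length within $\eta$ of $\tilde d(\tilde x,g\tilde x)$, subdivide it into $\tilde x=\tilde p_0,\dots,\tilde p_m=g\tilde x$ at consecutive arclength $\delta/8$ (so $m\le 8\,\tilde d(\tilde x,g\tilde x)/\delta+1$ and $\sum_j\tilde d(\tilde p_j,\tilde p_{j+1})\le L(\tilde\gamma)$), project $\tilde\gamma$ to the loop $\gamma$ in $X$ and $\tilde p_j$ to $p_j\in X$, set $q_j:=f(p_j)$ (so $d'(q_j,q_{j+1})\le d(p_j,p_{j+1})+\epsilon<\delta/2$), and let $\sigma$ be the loop in $Y$ joining the $q_j$ cyclically by geodesics; then $L(\sigma)\le\sum_j(d(p_j,p_{j+1})+\epsilon)\le L(\tilde\gamma)+m\epsilon$, which is the right‑hand side as $\eta\to0$. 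The delicate point is that $\sigma$ represents the class $\bar f_*(g)$: although $f$ may grossly lengthen the sub‑arc $\gamma|_{[p_j,p_{j+1}]}$, it enlarges its diameter by at most $\epsilon$, so the closed loop formed by $f\circ\gamma|_{[p_j,p_{j+1}]}$ followed by the reverse of the $j$-th geodesic chord of $\sigma$ lies in a ball of radius $<\delta$ and is therefore null‑homotopic in $\tilde Y$; telescoping over $j$ shows $\sigma$ and $f\circ\gamma$ are freely homotopic modulo $K_Y$, and $f\circ\gamma$ lifts from $\tilde y$ to $\tilde f(\tilde\gamma)$, which terminates at $\tilde f(g\tilde x)=\bar f_*(g)\tilde y$. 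The symmetric inequality is proved the same way using $f^{-1}$, the relevant loops now lying in balls of radius $\le\delta/8+\epsilon<\delta-\epsilon$ — here $\delta>4\epsilon$ is used once more.

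From the two displacement estimates one obtains $N_X(R)\le N_Y\big((1+8\epsilon/\delta)R+\epsilon\big)$ and the reverse inclusion, so by the previous reduction the ratio $h(X,d,\nu)/h(Y,d',\mu)$ lies in $[(1+8\epsilon/\delta)^{-1},\,1+8\epsilon/\delta]$, which is $|h(X,d,\nu)/h(Y,d',\mu)-1|\le\Psi(\epsilon\,|\,\delta)$ (if $h(Y,d',\mu)=0$ the same estimates force $h(X,d,\nu)=0$ and the statement is read accordingly). The step I expect to be hardest is the homotopy bookkeeping in the third paragraph — verifying that the reconstructed discrete loop genuinely carries the class $\bar f_*(g)$ — which rests on the contrast that a Gromov–Hausdorff approximation controls diameters but not lengths of subsets, together with the fact (established in the set‑up) that loops of sufficiently small diameter die in the $\delta$-cover; the remaining ingredients are routine packing and triangle‑inequality estimates and the careful matching of the topological identification $\tilde X\cong\tilde Y$ with the metric identification $\tilde X=X_{\delta-\epsilon}$.
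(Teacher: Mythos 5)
Your proposal is correct and follows essentially the same route as the paper: reduce the volume entropy to the orbit-growth rate of the deck-transformation group, then compare displacements in the two metrics by subdividing a near-minimal representative path into pieces of length a fixed fraction of $\delta$ and replacing each piece by a geodesic in the other metric, using the fact that the resulting small-diameter loops die in the $\delta$-cover to preserve the homotopy class. Your write-up is in fact more careful than the paper's at the two points it glosses over — the identification of the universal covers and covering groups of $X$ and $Y$ via $f_*$, and the telescoping verification that the discretized loop still represents $\bar f_*(g)$.
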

 \begin{proof}
 Since $(X, d, \nu)$ is homeomorphic to $(Y, d', \mu)$, we may view $X$ and $Y$ as one metric space $X$ with two different metrics $d$, $d'$ and two different measures $\nu$, $\mu$, such that the identity map is an $\epsilon$-Gromov-Hausdorff approximation. Thus the universal cover space $\tilde X$ of $(X, d')$ admits two pullback metrics $\tilde d$, $\tilde d'$ and measures $\tilde \nu$, $\tilde \mu$ with the same deck-transformations $\Gamma=\bar\pi_1(X, x)$.  And since $\tilde X$ is a $\delta$-cover of $(X, d')$, by the $\epsilon$-Gromov-Hausdorff closeness, $\tilde X$ is a $\delta/2$-cover of $(X, d)$.
Let $D=\op{diam}(X, d)$, let $B_{R}(\tilde x, \tilde d)$ be  a metric ball of $\tilde x\in (\tilde X, \tilde d)$ of radius $R$. Let
$$\Gamma(R,\tilde d)=\{\gamma\in \Gamma, \, \tilde d(\tilde x, \gamma\tilde x)\leq R\},$$
and let $|\Gamma(R, \tilde d)|$ be the number of $\Gamma(R, \tilde d)$.
Then 
$$\frac{\tilde \nu (B_{R}(\tilde x,\tilde d))}{\tilde \nu (B_{D}(\tilde x,\tilde d))}\le |\Gamma(R,\tilde d)|\le \frac{\tilde \nu (B_{R+D}(\tilde x,\tilde d))}{\nu(B_{D}(x,d))},$$
implies that
\begin{equation}\label{vol-for}
h(X, d, \nu)=\lim_{R\to \infty}\frac{\ln |\Gamma(R,\tilde d)|}{R}. \end{equation}
Now we claim that there is a constant $R_0>0$ such that $$\Gamma(R,\tilde d')\subset  \Gamma\left(R\left(1+\frac{\epsilon}{R_0}\right), \tilde d\right).$$

Assuming the claim, we derive
\begin{eqnarray*}
h(X, d, \nu)&=&\lim_{R\to\infty}\frac{1}{R}\cdot\ln (|\Gamma(R,\tilde d)|)\\
&=&\lim_{R\to\infty}\frac{1}{R\left(1+\frac{\epsilon}{R_0}\right)}\cdot\ln \left(\left|\Gamma\left(R\left(1+\frac{\epsilon}{R_0}\right),\tilde d\right)\right|\right)\\
& \ge& \left(1-\frac{\epsilon}{R_0}\right) \lim_{R\to\infty}\frac{1}{R}\cdot\ln \left(\left|\Gamma\left(R,\tilde d'\right)\right|\right)\\
& = &\left(1-\frac{\epsilon}{R_0}\right)h(Y, d', \mu).
\end{eqnarray*}

In order to prove the claim, it suffices to show that for any $\gamma\in\Gamma$,
$$\frac{d\left(\tilde x, \gamma(\tilde x)\right)}{d'\left(\tilde x, \gamma(\tilde x)\right)}\le 1+\frac{2\epsilon}{\delta},$$
where $R_0=\frac{\delta}{2}$.

Let $\gamma:[0,l]\to (X,d')$ be the geodesic loop in the representation class of $\gamma\in \Gamma$ that has the minimal length.
The identity map $\op{id}:(X, d)\to (X, d')$ is
an $\epsilon$-Gromov-Hausdorff approximation,
for any two points $x,y\in X$, 
$$\left|d(x, y)-d'(x, y)\right|<\epsilon.$$

Take a partition of $0=t_0<t_1<\cdots<t_k=l$ such that for each $0\le j\le k-2$, $|t_{j+1}-t_j|=\frac{\delta}{2}$ and $t_k-t_{k-1}<\frac{\delta}{2}$. Then there are minimal geodesics $\alpha_j$ with respect to $d$ connecting $\gamma(t_j)$ and $\gamma(t_{j+1})$ whose length $\le |t_{j+1}-t_j|+\epsilon$, $0\leq j\leq k-1$. Moreover, the broken geodesic $\alpha$ formed by $\alpha_j$ is represented the same element as $\gamma$ in $\Gamma$ as a deck-transformation of $(\tilde X, \tilde d')$. Because the length of $\alpha$ with respect to $d$ $\le l+\frac{2l}{\delta}\epsilon$, 
$$\frac{d(\tilde x,\gamma(\tilde x))}{d'(\tilde x, \gamma(\tilde x))}\le 1+\frac{2\epsilon}{\delta}.$$

By changing the role of $d'$ and $d$, the above argument also implies 
$$h(Y,  d', \mu)\geq (1-\frac{4\epsilon}{\delta})h(X, d, \nu).$$
 \end{proof}
 
Note that in above theorem, if $Y$ is a smooth Riemannian manifold, then $\delta$ can be taken as the injective radius. 

By \eqref{vol-for}, the volume entropy $h(X,d,\nu)$ is equal to the exponential growth rate of orbits of deck-transformations on $\tilde X$. The convergence of the exponential growth rate of deck-transformations' orbit points on some special normal covers is also necessary in the study of continuity problem of volume entropy. 
  
Inspecting the proof of \cite[Lemma 4.7]{CRX}, the following continuity property holds.
\begin{Lem} \label{con-pro-1}
Assume a sequence of compact metric measure spaces $(X_i, d_i, \nu_i)$ is measured Gromov-Hausdroff convergent to a compact metric measure space $(X, d, \nu)$ and satisfies 
 the following commutative diagram:
\begin{equation*}\begin{array}[c]{ccc}
(\hat X_i, \hat x_i, \hat \Gamma_i) & \xrightarrow{GH} & (\hat X, \hat x, \hat G)\\
\downarrow\scriptstyle{\bar \pi_i}&&\downarrow\scriptstyle{\bar \pi}\\
(X_i,x_i)&\xrightarrow{GH} &(X, x), 
\end{array} \end{equation*}
where $(\hat X_i, \hat d_i, \hat \nu_i)$ is a normal cover of $(X_i, d_i, \nu_i)$, $\hat \Gamma_i$ is the deck-transformations of $\hat X_i$, $\hat \Gamma_i\cong \hat G$. Assume $\hat G$ is discrete and $\hat x$, $x$ are regular points of $\hat X$ and $X$ respectively, where $x\in X$ is called regular if there is an integer $k\ge 0$ such that $(X, r d, x)\overset{GH}\to (\Bbb R^k, 0)$ as $r\to \infty$.
Then
\begin{equation*}\lim_{i\to \infty}\lim_{R\to \infty}\frac{\ln |\hat \Gamma_i(R)|}{R}=h_{\hat x}(\hat X):=\limsup_{R\to \infty}\frac{\ln \hat\nu(B_R(\hat x))}{R}, \label{c-1}\end{equation*}
where $\hat \Gamma_i(R)=\{\hat\gamma\in \hat\Gamma_i, \hat d_i(\hat\gamma_i\hat x_i, \hat x_i)\leq R\}$.
\end{Lem}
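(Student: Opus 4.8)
The plan is to follow the same two-sided estimate as in the proof of Theorem \ref{con-pro-2}, but now comparing the orbit-counting function $|\hat\Gamma_i(R)|$ on the normal cover $\hat X_i$ with the volume growth $\hat\nu(B_R(\hat x))$ on the limit space $\hat X$. First I would record the ``orbit counting versus volume'' sandwich: since $\hat G$ is discrete and $\hat X/\hat G = X$ is compact with $\op{diam}(X)\le D$, a fundamental domain argument gives, for a fixed $D_0>D$,
\begin{equation*}
\frac{\hat\nu(B_R(\hat x))}{\hat\nu(B_{D_0}(\hat x))}\le |\hat\Gamma(R)|\le \frac{\hat\nu(B_{R+D_0}(\hat x))}{v_0},
\end{equation*}
where $v_0>0$ is a uniform lower bound for the measure of a ball of radius $D_0$ (available since $\hat\nu$ is obtained as a measured-GH limit and $\hat x$ is in the support), and the analogous inequalities on each $\hat X_i$; here $\hat G(R)=\{g\in\hat G: \hat d(g\hat x,\hat x)\le R\}$. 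Taking $R\to\infty$ these show $h_{\hat x}(\hat X)=\limsup_R \frac1R\ln|\hat G(R)|$, and likewise $\lim_R\frac1R\ln|\hat\Gamma_i(R)|$ exists for each $i$ and equals the entropy of $\hat X_i$.

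Next I would transfer the counting estimate across the measured-GH convergence. Because $\hat G$ is discrete, for any fixed $R$ there is $\eta_R>0$ so that distinct points of the orbit $\hat G\hat x\cap B_R(\hat x)$ are $\eta_R$-separated; by equivariant GH convergence, for $i$ large $\hat\Gamma_i\hat x_i\cap B_R(\hat x_i)$ contains exactly $|\hat G(R)|$ points that are $(\eta_R/2)$-separated, and conversely every element of $\hat\Gamma_i(R)$ moves $\hat x_i$ to within $\epsilon_i$ of some element of $\hat G(R+\epsilon_i)\hat x$. Hence
\begin{equation*}
|\hat G(R)|\le \liminf_{i\to\infty}|\hat\Gamma_i(R)|\le \limsup_{i\to\infty}|\hat\Gamma_i(R)|\le |\hat G(R+o(1))|,
\end{equation*}
so that for every fixed $R$, $\lim_{i}|\hat\Gamma_i(R)|$ is squeezed between $|\hat G(R)|$ and $|\hat G(R^+)|$. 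Dividing by $R$ and letting $R\to\infty$ along a sequence avoiding the (at most countable) jump set of $R\mapsto|\hat G(R)|$, the right-hand side converges to $h_{\hat x}(\hat X)$, giving $\limsup_i h(\hat X_i)\le h_{\hat x}(\hat X)$ and, from the left inequality, a matching lower bound in the liminf sense.

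The one genuinely delicate point — and where I would spend most of the work — is the interchange of the double limit $\lim_i\lim_R$: the estimate above controls $\lim_i|\hat\Gamma_i(R)|$ for each \emph{fixed} $R$, but the quantity of interest is $\lim_i$ of the $R\to\infty$ rate. To close this I would use that $\hat x$ is a regular point, i.e. $(X,rd,x)\to(\mathbb R^k,0)$, together with the co-compact $\hat G$-action: this forces the orbit-growth to be \emph{uniformly} polynomial-times-exponential, more precisely it gives an upper bound $|\hat\Gamma_i(R)|\le C e^{(h_{\hat x}(\hat X)+\Psi(1/R_1))R}$ valid for all $R\ge R_1$ with constants independent of $i$ (this is exactly the mechanism used in \cite[Lemma 4.7]{CRX}, where regularity of $\hat x$ is what makes a fixed-radius ball at scale $R$ look like a ball in the tangent cone $\mathbb R^k$, so that the packing/covering counts stabilize). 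With such an $i$-uniform exponential bound, a standard diagonal argument lets one exchange the limits, and combined with the two-sided counting estimate the equality $\lim_i\lim_R \frac1R\ln|\hat\Gamma_i(R)| = h_{\hat x}(\hat X)$ follows. I would present the regularity-driven uniform bound as the technical heart of the argument and refer to the corresponding step in \cite{CRX} for the parts that are verbatim the same.
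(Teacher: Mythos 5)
Your skeleton agrees with the paper's proof at both ends: the ``orbit counting versus volume'' sandwich relating $|\hat G(R)|$ to $\hat\nu(B_R(\hat x))$ is exactly the paper's last step, and you correctly isolate the interchange of $\lim_i$ and $\lim_R$ as the crux. But the middle of your argument has a genuine gap. A fixed-$R$ comparison of the form $|\hat G(R)|\le\liminf_i|\hat\Gamma_i(R)|\le\limsup_i|\hat\Gamma_i(R)|\le|\hat G(R+o(1))|$ gives you essentially nothing about $\lim_{R\to\infty}\frac1R\ln|\hat\Gamma_i(R)|$ for a fixed $i$: the growth rate is an asymptotic quantity, so in particular the claimed ``matching lower bound in the liminf sense'' does \emph{not} follow from the left inequality (a lower bound on $|\hat\Gamma_i(R)|$ at finitely many radii puts no lower bound on the exponential rate), and the upper bound has the same problem. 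The fix you propose --- an $i$-uniform bound $|\hat\Gamma_i(R)|\le Ce^{(h+\Psi)R}$ --- is only asserted, addresses only one direction, and is attributed to the wrong mechanism: regularity of $\hat x$ is not what controls large balls. What actually closes the gap in the paper (following \cite[Lemma 4.7]{CRX}) is the construction of an $\epsilon_i$-conjugate map $\hat f_i:(\hat X_i,\hat x_i)\to(\hat X,\hat x)$ whose almost-equivariance, propagated through words in the cocompactly acting group, makes the GH-approximation error on $B_R(\hat x_i)$ grow only \emph{linearly} in $R$ (error $\frac{R}{10D}\epsilon_i$). This yields the two-sided inclusion
\begin{equation*}
\hat G(\hat x)\cap B_{(1-\epsilon_i/10D)R}(\hat x)\;\subset\;\hat\phi_i(\hat\Gamma_i(R))(\hat x)\;\subset\;\hat G(\hat x)\cap B_{(1+\epsilon_i/10D)R}(\hat x)
\end{equation*}
valid for \emph{all} large $R$ simultaneously, i.e.\ a multiplicative distortion of the radius by $1\pm\epsilon_i/10D$; with that, both $R\to\infty$ and then $i\to\infty$ can be taken and the rates match up to a factor $1\pm\epsilon_i/10D$.

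The role of the regularity hypothesis is also different from what you describe: it is used, via \cite[Proposition 1.8]{Grove}, to show that the isotropy group of $\hat G$ at $\hat x$ is trivial, so that together with discreteness the orbit $\hat G\hat x$ is uniformly $\delta$-separated and the orbit-counting map is faithful, i.e.\ $|\hat\Gamma_i(R)(\hat x_i)|=|\hat\phi_i(\hat\Gamma_i(R))(\hat x)|$ for large $i$. It is not used to make balls of radius $R$ resemble balls in the tangent cone or to stabilize packing counts at large scale. To repair your write-up you would need to replace the fixed-$R$ transfer step by the linear-error almost-equivariant comparison above (or prove an equivalent uniform-in-$R$ two-sided estimate).
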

\begin{proof}
In fact, by the proof of \cite[Lemma 4.7]{CRX}, using the $\epsilon_i$-equivariant Gromov-Hausdorff approximations
$$(\hat h_i, \hat \phi_i): (B_{\epsilon_i^{-1}}(\hat x_i), \hat x_i, \hat \Gamma_i)\to (\hat X, \hat x, \hat G),$$
where $\epsilon_i\to 0$ and $\hat \phi_i: \hat \Gamma_i\to \hat G$ is an isomorphism,  an $\epsilon_i$-conjugate map 
$$\hat f_i: (\hat X_i, \hat x_i)\to (\hat X, \hat x)$$
can be constructed, i.e., for each $\hat y_i\in \hat X_i$, $\hat \gamma_i\in \hat \Gamma_i$,
\begin{equation}d(\hat f_i(\gamma_i(\hat y_i)), \hat \phi_i(\hat \gamma_i)(\hat f_i(\hat y_i)))\leq \epsilon_i, \label{formula-1}\end{equation}
such that for any $R>500\op{diam}(X)=500D$, 
\begin{equation}\hat f_i: B_R(\hat x_i)\to B_{(1+\epsilon_i/60D)R}(\hat x) \label{formula-2}\end{equation}
is an $\frac{R}{10D}\epsilon_i$-Gromov-Hausdorff approximation. 

Since $\hat x$ and $x$ are regular points, by \cite[Proposition 1.8]{Grove}, $\hat G$ has trivial isotropy group at $\hat x$. And the discreteness of $\hat G$ implies that there is $\delta>0$ such that for any $\hat \gamma, \hat \gamma'\in \hat G$, 
$$\hat d(\hat \gamma\hat x, \hat \gamma'\hat x)>\delta.$$

Assume $\hat{\phi}_i(\hat \gamma_i)=\hat \gamma, \hat \phi_i(\hat \gamma'_i)=\hat \gamma'$. By \eqref{formula-1}, 
$$\hat d(\hat f_i(\hat \gamma_i(\hat x_i)),\hat f_i( \hat \gamma'_i(\hat x_i)))\geq \delta-2\epsilon_i>0,$$
for $i$ large.
Thus
$$|\hat \Gamma_i(R)(\hat x_i)|=|\hat f_i(\hat \Gamma_i(R)(\hat x_i))|=|\hat \phi_i(\hat \Gamma_i(R))(\hat x)|.$$

Now by \eqref{formula-2}, 
$$B_{(1-\epsilon_i/10D)R}(\hat x)\subset \hat f_i(\hat \Gamma_i(R)(\hat x_i))\subset B_{(1+\epsilon_i/10D)R}(\hat x).$$
Then $\hat f_i$ is $\epsilon_i$-conjugate implies
$$\hat G(\hat x)\cap B_{(1-\epsilon_i/10D)R}(\hat x)\subset \hat \phi_i(\hat \Gamma_i(R))(\hat x)\subset \hat G(\hat x)\cap B_{(1+\epsilon_i/10D)R}(\hat x),$$
and thus 
$$|\hat G(\hat x)\cap B_{(1-\epsilon_i/10D)R}(\hat x)|\leq  |\hat \Gamma_i(R)(\hat x_i))|\leq  |\hat G(\hat x)\cap B_{(1+\epsilon_i/10D)R}(\hat x)|.$$
And together with the fact
$$\frac{\hat \nu(B_R(\hat x))}{\nu(B_D(x))}\leq |\hat G(\hat x)\cap B_R(\hat x)|\leq \frac{\hat \nu(B_R(\hat x))}{\hat \nu(B_{\delta}(\hat x))}$$
we derive the conclusion.
\end{proof}

Combing with Lemma~\ref{con-pro-1} and Theorem \ref{equ-str}, a sufficient condition for the continuity of volume entropy can be derived as follows, whose proof will be applied in proving Theorems \ref{main-1} and \ref{main}.

\begin{Thm} \label{con-pro-3}
Assume a sequence of compact metric measure spaces $(X_i, d_i, \nu_i)$  is measured Gromov-Hausdroff convergent to a compact metric measure space $(X, d, \nu)$ with
\begin{equation*}\begin{array}[c]{ccc}
(\tilde X_i,\tilde x_i,\Gamma_i)&\xrightarrow{GH}&(\tilde X,\tilde x,G)\\
\downarrow\scriptstyle{\pi_i}&&\downarrow\scriptstyle{\pi}\\
(X_i,x_i)&\xrightarrow{GH} &(X, x), 
\end{array} \end{equation*}
where $\tilde X_i$ is the universal cover of $X_i$ and $\Gamma_i$ is the deck-transformation group of $\tilde X$. If $G$ is discrete and $\tilde x$, $x$ are regular points, then
$$\lim_{i\to \infty}h(X_i, d_i, \nu_i)=h_{\tilde x}(\tilde X).$$
\end{Thm}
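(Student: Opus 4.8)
The plan is to reduce the statement to Lemma \ref{con-pro-1} by passing to an appropriate middle cover supplied by Theorem \ref{equ-str}, and then to compare the exponential growth rate on that cover with the volume entropy of $\tilde X$. First I would observe that since $G$ is discrete, one may take $G_0=\{e\}$ in Theorem \ref{equ-str} (taking $R_0$ to be any positive number, say $R_0=1$, so that $G_0(R_0)=\{e\}$ generates $G_0$), and the remaining hypotheses (1)--(3) and (5) hold: $G/G_0=G$ is discrete, $\tilde X_i$ is the universal cover with deck group $\Gamma_i$, and $\tilde X/G=X$ is compact. Theorem \ref{equ-str} then produces normal subgroups $\Gamma_i'\subset\Gamma_i$ with $(\tilde X_i,\tilde x_i,\Gamma_i')\xrightarrow{GH}(\tilde X,\tilde x,\{e\})$ and a commutative diagram with $\hat X_i=\tilde X_i/\Gamma_i'$, $\hat\Gamma_i=\Gamma_i/\Gamma_i'$ such that $(\hat X_i,\hat x_i,\hat\Gamma_i)\xrightarrow{GH}(\tilde X,\tilde x,G)$ and $\hat\Gamma_i\cong G$ for $i$ large. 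In particular $\hat X_i$ is a normal cover of $X_i$ (it sits between $\tilde X_i$ and $X_i$).

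Next I would apply Lemma \ref{con-pro-1} to this diagram: its hypotheses are exactly that $(\hat X_i,\hat d_i,\hat\nu_i)$ is a normal cover of $(X_i,d_i,\nu_i)$, $\hat\Gamma_i\cong\hat G:=G$ is discrete, and $\hat x=\tilde x$, $x$ are regular points — all of which we have. The conclusion is
$$\lim_{i\to\infty}\lim_{R\to\infty}\frac{\ln|\hat\Gamma_i(R)|}{R}=h_{\tilde x}(\tilde X).$$
It therefore remains to identify the inner double limit with $\lim_i h(X_i,d_i,\nu_i)$. For this I would use the fact that $\Gamma_i'$ is generated by elements whose displacement at $\tilde x_i$ (indeed on a fixed large ball) tends to $0$: since $(\tilde X_i,\tilde x_i,\Gamma_i')\to(\tilde X,\tilde x,\{e\})$, for any fixed $R$ every element of $\Gamma_i'$ represented within $B_R(\tilde x_i)$ moves $\tilde x_i$ by at most $\epsilon_i\to 0$. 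Combined with the orbit-counting identity \eqref{vol-for}, $h(X_i,d_i,\nu_i)=\lim_{R\to\infty}\frac{\ln|\Gamma_i(R,\tilde d_i)|}{R}$, and the exact sequence $1\to\Gamma_i'\to\Gamma_i\to\hat\Gamma_i\to 1$, one gets that the orbit $\Gamma_i(R)\tilde x_i$ and the orbit $\hat\Gamma_i(R)\hat x_i$ have the same exponential growth rate in $R$, because $\Gamma_i'$ contributes only a subexponential (in fact, for the counting of distinct $\Gamma_i$-translates up to radius $R$, uniformly bounded-fiber) discrepancy — each coset of $\Gamma_i'$ meeting $B_R$ is represented by an element of controlled displacement, and the number of $\Gamma_i'$-elements within any bounded displacement at $\tilde x_i$ is uniformly bounded by the packing constant. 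Hence $\lim_{R\to\infty}\frac{\ln|\Gamma_i(R,\tilde d_i)|}{R}=\lim_{R\to\infty}\frac{\ln|\hat\Gamma_i(R)|}{R}$ for each $i$, and taking $i\to\infty$ with Lemma \ref{con-pro-1} finishes the proof.

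The main obstacle I expect is the last comparison step: making precise that dividing out $\Gamma_i'$ does not change the exponential growth rate of orbit counts. The subtlety is uniformity in $i$ — one must control, independently of $i$, how many $\Gamma_i'$-points lie within a given displacement of $\tilde x_i$ and argue that passing from $\Gamma_i$ to $\hat\Gamma_i=\Gamma_i/\Gamma_i'$ changes $|\Gamma_i(R)|$ only up to a factor that is subexponential in $R$ (uniformly in $i$), so it disappears after dividing by $R$. This is where the discreteness of $G$ and the packing/doubling bound from \S2.3 enter, together with the description in Theorem \ref{equ-str}(i) of $\Gamma_i'$ as generated by elements $\epsilon_i$-close to the identity; I would likely lift the relevant part of the argument from \cite[Lemma 4.7]{CRX} or \cite[Theorem 4.6]{CRX}, whose proof already handles exactly this bookkeeping.
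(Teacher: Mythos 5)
Your overall strategy is the same as the paper's: take $G_0=\{e\}$ in Theorem~\ref{equ-str}, feed the resulting middle cover into Lemma~\ref{con-pro-1}, and then compare the orbit-counting functions of $\Gamma_i$ and $\hat\Gamma_i=\Gamma_i/\Gamma_i'$. The one step you justify incorrectly is the comparison itself. Your claim that ``the number of $\Gamma_i'$-elements within any bounded displacement at $\tilde x_i$ is uniformly bounded by the packing constant'' is false: a packing bound controls how many mutually separated orbit points fit in a ball, but says nothing about orbit points crammed into an $\epsilon_i$-ball, which is exactly the situation for $\Gamma_i'$; in general the number of orbit points of a deck group inside a fixed ball diverges as the spaces collapse. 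What actually saves the argument---and what the paper uses---is that $\Gamma_i'$ is \emph{finite} for each fixed $i$: since $\Gamma_i'$ is generated by elements whose displacement at $\tilde x_i$ is at most $\epsilon_i$, its orbit $\Gamma_i'\tilde x_i$ is $\epsilon_i$-chain connected, and the convergence $(\tilde X_i,\tilde x_i,\Gamma_i')\to(\tilde X,\tilde x,\{e\})$ forces that orbit to stay inside $B_{2\epsilon_i}(\tilde x_i)$ (it cannot jump across the empty annulus); discreteness of the deck action then gives $|\Gamma_i'|\le C_i<\infty$, where $C_i$ may well tend to infinity with $i$. This yields exactly \eqref{c-2}, namely $|\hat\Gamma_i(R)|\le|\Gamma_i(R)|\le|\Gamma_i'|\cdot|\hat\Gamma_i(R)|$, and since the factor $|\Gamma_i'|$ does not depend on $R$ it disappears after taking $\frac1R\ln(\cdot)$ and $R\to\infty$, giving $h(X_i,d_i,\nu_i)=\lim_{R\to\infty}\frac{\ln|\hat\Gamma_i(R)|}{R}$ via \eqref{vol-for}.

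Relatedly, the ``main obstacle'' you flag---uniformity in $i$ of the discrepancy---is a misdiagnosis. The conclusion is an iterated limit: $R\to\infty$ first, for each fixed $i$, and only then $i\to\infty$. The passage from $\Gamma_i$ to $\hat\Gamma_i$ therefore only requires a per-$i$ constant, and all of the $i$-uniform bookkeeping is already packaged inside Lemma~\ref{con-pro-1}. With the packing-constant claim replaced by the finiteness of $\Gamma_i'$ as above, your proof closes and coincides with the paper's.
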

\begin{proof}
Since $G$ itself is discrete, take $G_0=\{e\}$ in Theorem~\ref{equ-str}, and thus we have 
\begin{equation}\begin{array}[c]{ccc}
(\tilde X_i,\tilde x_i,\Gamma_i)&\xrightarrow{GH}&(\tilde X,\tilde x,G)\\
\downarrow\scriptstyle{\hat \pi_i}&&\downarrow\scriptstyle{\hat \pi}\\
(\hat X_i, \hat x_i, \hat \Gamma_i) & \xrightarrow{GH} & (\hat X, \hat x, \hat G)\\
\downarrow\scriptstyle{\bar \pi_i}&&\downarrow\scriptstyle{\bar \pi}\\
(X_i,x_i)&\xrightarrow{GH} &(X, x), \label{equ-1}
\end{array} \end{equation}
where $\hat X_i=\tilde X_i/\Gamma_i^{\epsilon}, \hat X=\tilde X$, $\Gamma_i^{\epsilon}=\Gamma_i'\to \{e\}$, and $\hat \Gamma_i=\Gamma_i/\Gamma_i^{\epsilon}\cong \hat G=G$.

Since $\Gamma_i^{\epsilon}$ is discrete, $G_0=\{e\}$ implies that $|\Gamma_i^{\epsilon}|<C_i<\infty$.

As pointed out in \cite{CRX}, 
\begin{equation}|\hat \Gamma_i(R)|\leq |\Gamma_i(R)|\leq |\Gamma_i^{\epsilon}|\cdot |\hat \Gamma_i(R)|, \label{c-2}\end{equation}
Combing with \eqref{vol-for}, it implies
$$h(X_i, d_i, \nu_i)=\lim_{R\to \infty}\frac{\ln |\hat \Gamma_i(R)|}{R}.$$
Now by Lemma~\ref{con-pro-1} and $\tilde X=\hat X$, 
$$\lim_{i\to\infty}h(X_i, d_i, \nu_i)=h_{\tilde x}(\tilde X).$$
\end{proof}

\begin{Rem}
In general, for a sequence of compact metric measure spaces $(X_i, d_i, \nu_i)$, even if they are all homeomorphic to and measured Gromov-Hausdroff converging to a compact metric measure space $(X, d, \nu)$, the limit group $G$ of deck-transformations $\Gamma_i$ on the universal cover $\tilde X_i$ of $X_i$
% in 
%\begin{equation*}\begin{array}[c]{ccc}
%(\tilde X_i,\tilde x_i,\Gamma_i)&\xrightarrow{GH}&(\tilde X,\tilde x,G)\\
%\downarrow\scriptstyle{\pi_i}&&\downarrow\scriptstyle{\pi}\\
%(X_i,x_i)&\xrightarrow{GH} &(X, x), 
%\end{array} \end{equation*}
is not necessarily discrete, and neither $\tilde X$ is the universal cover of $X$. 

However, by Theorem~\ref{equ-str} and Lemma \ref{con-pro-1}, after taking a suitable normal subgroup $G_0$ of $G$ such that $\hat G=G/G_0$ is discrete (e.g., the identity component of $G$ if $G$ is a Lie group), the exponential growth rate on the orbits always converges in the middle level of \eqref{equ-1} if $\hat x$ and $x$ are regular,
$$\lim_{i\to \infty}\lim_{R\to \infty}\frac{\ln |\hat \Gamma_i(R)|}{R}=h_{\hat x}(\hat X).$$

The key point in the continuity of volume entropy (e.g., in Theorem \ref{con-pro-3}) lies in the connection between $h(X_i)$ and $\lim_{R\to \infty}\frac{\ln |\hat \Gamma_i(R)|}{R}$, where the finiteness of subgroup $\Gamma_i^{\epsilon}$ (e.g., in the case that $G_0$ is trivial) takes a substantial role.   
\end{Rem}

By Theorems \ref{uni-cov} and \ref{uni-cov-1},  the limit $\tilde X$ in \eqref{equ-1} of universal covers $\tilde X_i$ in proving Theorems \ref{main} and \ref{main-1} is hyperbolic space $\mathbb{H}^k$. And it was proved in \cite[Theorem 2.5]{CRX} that if the limit group $G$ on $\tilde X$ is a Lie group that co-compactly acts on a hyperbolic manifold, then its identity component $G_0$ is either trivial or non-nilpotent. For $n$-manifolds with a negative lower Ricci curvature bound, the generalized Margulis lemma \cite{KW} with a uniform index bound implies that $G_0$ is nilpotent, and hence $G$ is discrete. Then Theorem ~\ref{con-pro-3} implies the continuity of volume entropy. 

Since in proving Theorems \ref{main} and \ref{main-1}, we will apply the generalized Margulis lemma (Theorem \ref{mar-lem}) without a uniform index bound, the nilpotent subgroup of $\Gamma_i^\epsilon$ will only converge to a nilpotent subgroup $N$ of $G_0$. Thus a more careful study of $N$ is required.  The main technical part of this note is to derive a property of $N$ (see Theorem \ref{prop-hyp} below), which also implies the finiteness of $\Gamma_i^\epsilon$.

\subsection{Gromov-Hausdorff limit nilpotent isometry groups on $\mathbb{H}^k$}
\begin{Thm} \label{prop-hyp}
Assume the following commutative diagram about geodesic metric spaces:
\begin{equation}\begin{array}[c]{ccc}
(\tilde X_i,\tilde x_i,\Gamma_i)&\xrightarrow{GH}&(\Bbb H^k,\tilde x,G)\\
\downarrow\scriptstyle{\pi_i}&&\downarrow\scriptstyle{\pi}\\
(X_i,x_i)&\xrightarrow{GH} &(X, x), \label{two}
\end{array} \end{equation}
where $k\geq 2$, $X$ is compact, $\tilde X_i$ is the universal cover of $X_i$ and $\Gamma_i$ is the deck-transformations of $\tilde X_i$. 

Assume that $N$ is the limit of a sequence of subgroups $N_i$ of $\Gamma_i$ and that $N$ is a nilpotent normal subgroup of $G_0$.
Then all elements of $N$  are elliptic and have common fixed points in $\Bbb H^k$.
\end{Thm}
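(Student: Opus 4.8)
The plan is to study the action of the nilpotent group $N$ on $\mathbb{H}^k$ via the classification of isometries of hyperbolic space and the structure theory of nilpotent groups acting on $\mathbb{H}^k$ developed in Besson-Courtois-Gallot \cite[\S 2]{BCG}. Recall that every isometry of $\mathbb{H}^k$ is elliptic (has a fixed point in $\mathbb{H}^k$), parabolic (has a unique fixed point on $\partial \mathbb{H}^k$), or hyperbolic/loxodromic (has exactly two fixed points on $\partial\mathbb{H}^k$, translating along the axis joining them). The first step is to recall that a nilpotent group of isometries of $\mathbb{H}^k$ that does not have a common fixed point in $\mathbb{H}^k$ must fix a point on the boundary $\partial \mathbb{H}^k$: indeed, for a nilpotent (more generally amenable) group acting on the proper CAT($-1$)-space $\mathbb{H}^k$, either it fixes an interior point, or it stabilizes a point or a geodesic at infinity. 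In the latter cases the group is "parabolic-type" or "hyperbolic-type" and, crucially, acts on a horosphere or the axis with quotient related to a group of similarities/isometries of $\mathbb{R}^{k-1}$. The key consequence I want is: if $N$ has no common fixed point in $\mathbb{H}^k$, then $N$ (or a finite-index subgroup) preserves a horoball or a geodesic line, and along that structure it behaves like a group of affine similarities of Euclidean space.

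The second step is to bring in the constraint coming from the diagram \eqref{two}, namely that $N$ is a \emph{normal} subgroup of $G_0$ and $\mathbb{H}^k/G$ is compact. Normality forces $G$ (hence $G_0$) to preserve the fixed-point set of $N$ at infinity: if $N$ fixes a unique boundary point $\xi$, then $G$ fixes $\xi$, so $G$ stabilizes a horoball centered at $\xi$; if $N$ stabilizes a geodesic $\ell$, then $G$ stabilizes the pair of endpoints (up to index 2), so $G$ preserves $\ell$. In either case $G$ cannot act cocompactly on $\mathbb{H}^k$ — a group fixing a boundary point or a geodesic moves points arbitrarily far from that fixed structure only in bounded ways transverse to it, so $\mathbb{H}^k/G$ is noncompact. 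This contradicts hypothesis that $X = \mathbb{H}^k/G$ (or rather $\tilde X/G$) is compact. This is exactly the mechanism of \cite[Theorem 2.5]{CRX} (``$G_0$ is either trivial or non-nilpotent'' for cocompact actions), but here applied to the possibly-non-normal-in-$G$ but normal-in-$G_0$ subgroup $N$ — so I need to check the normality bookkeeping carefully, perhaps passing to the normal closure of $N$ in $G$ or using that $G_0$ is the relevant object and $G/G_0$ is discrete, so $G_0$ cocompact-up-to-discrete still forbids a $G_0$-fixed boundary point.

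The third step, which closes the argument, is to rule out the remaining possibility that $N$ fixes an interior point but that the fixed-point sets of individual elements are not \emph{common}: once we know each element of $N$ is elliptic (which follows once we've excluded parabolic and hyperbolic elements — a single parabolic or hyperbolic element already generates a subgroup with a boundary fixed structure that, by the normality/cocompactness argument above applied to its normal closure, is forbidden), we use that the fixed-point set $\mathrm{Fix}(g)$ of an elliptic isometry is a totally geodesic subspace, and a nilpotent group of elliptic isometries has a common fixed point. This last fact can be proved by induction on the nilpotency step: the center $Z(N)$ consists of elliptic elements, $\mathrm{Fix}(Z(N))$ is a nonempty totally geodesic subspace (intersection of the fixed sets, nonempty because central elements commute and elliptic commuting isometries of $\mathbb{H}^k$ share a fixed point — a convexity/ Bruhat-Tits fixed point argument), it is $N$-invariant by normality of $Z(N)$, and $N/Z(N)$ acts on it by elliptic isometries with strictly smaller step, so induction applies.

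The main obstacle I expect is the second step: correctly handling the hypothesis that $N$ is normal in $G_0$ rather than in $G$, and deriving the contradiction with cocompactness of $\tilde X/G$ in the generality of a metric-space limit (where a priori $G$ is only a closed subgroup of the isometry group, not obviously a Lie group with nice structure — though here $\tilde X = \mathbb{H}^k$ rescues us). Concretely, I must show: a group acting cocompactly on $\mathbb{H}^k$ and normalizing a nontrivial subgroup that fixes a boundary point or geodesic leads to a contradiction. The clean way is via \cite[\S 2]{BCG}: translate the problem to the induced action on $\partial\mathbb{H}^k \cong S^{k-1}$ and use that a cocompact $G$ acts as a convergence group with limit set all of $S^{k-1}$, which cannot normalize a subgroup with a $0$- or $2$-point limit set unless that subgroup is trivial. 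I'd also need to confirm the limit subgroups $N_i \le \Gamma_i$ pass to a genuine subgroup $N \le G$ in the equivariant Gromov-Hausdorff limit (standard, as in \cite[\S 2]{CRX}), and that nilpotency of step $\le C(K)$ is preserved in the limit.
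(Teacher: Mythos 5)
Your overall architecture (classify elements as elliptic/parabolic/hyperbolic, use normality of $N$ in $G_0$ plus cocompactness to kill the boundary-fixing cases, then run the Besson--Courtois--Gallot induction on fixed-point sets of central elements to get a common interior fixed point) matches the paper in its first and third steps: the hyperbolic case is indeed excluded because $G$ ends up preserving the axis of $\gamma_0$, forcing orbits into a tube and contradicting cocompactness, and the final common-fixed-point argument is exactly \cite[Lemma 2.4]{BCG} as in the paper. However, there is a genuine gap in your treatment of the parabolic case. The mechanism you propose --- ``$G$ fixes a boundary point $\xi$, hence stabilizes a horoball, hence cannot be cocompact'' --- is false: the full stabilizer of a boundary point acts transitively on $\Bbb H^k$, and a group fixing $\xi$ need not preserve horoballs (hyperbolic elements with an endpoint at $\xi$ translate them). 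Concretely, $G=\Bbb R^{k-1}\rtimes\Bbb Z$ (unipotent translations fixing $\xi$, extended by one hyperbolic element with endpoint $\xi$) acts cocompactly on $\Bbb H^k$ and normalizes the abelian subgroup $N=\Bbb R^{k-1}$ all of whose nontrivial elements are parabolic; so cocompactness plus normality of a nilpotent $N\trianglelefteq G_0$ simply does not rule out parabolics. Your proposed repair via convergence groups and limit sets does not save this: that theory applies to \emph{discrete} groups, whereas $G$, $G_0$ and $N$ here are closed, possibly positive-dimensional subgroups of $\op{Iso}(\Bbb H^k)$, and the example above is precisely a cocompact group normalizing a subgroup with one-point limit set.

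What is missing is any use of the hypothesis that $N$ is the Gromov--Hausdorff limit of subgroups $N_i$ of the deck-transformation groups $\Gamma_i$ of universal covers of the compact spaces $X_i$ --- and this is exactly how the paper closes the parabolic case. After showing (by ping-pong, as you do) that all of $N$ fixes $\xi$ and hence preserves every horosphere centered at $\xi$, the paper picks $\gamma_i\in N_i\to\gamma$ parabolic and a point $\tilde y_i$ over the basepoint of a shortest closed geodesic representing the conjugacy class of $\gamma_i$ in the compact quotient $X_i$; then $\tilde y_i,\gamma_i\tilde y_i,\gamma_i^2\tilde y_i$ lie on a common minimal geodesic, so in the limit $2\tilde d(\tilde y,\gamma\tilde y)=\tilde d(\tilde y,\gamma^2\tilde y)$, contradicting the strict triangle inequality for three points on a horosphere. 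Without some argument of this kind that exploits the deck-transformation origin of $N$, the parabolic case cannot be excluded, so your proof as written does not go through.
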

%\begin{Rem} 
%In \cite[Theorem 2.5]{CRX}, it showed that for a sequence of smooth manifolds $X_i$ in diagram \eqref{first} if $\tilde X$ is a hyperbolic manifold and $G_0$ is nilpotent then $G_0$ is trivial. Here we consider a normal subgroup $N$ of $G_0$ which is nilpotent. We do not know whether $N$ is trivial however the property we showed here is enough (see the proof of Theorem~\ref{main}).
%\end{Rem}
Note that by Theorem~\ref{uni-exi}, for $X_i \in \RCD(K, n)$, the universal cover of $X_i$ always exists and is a $\RCD(K, n)$-space. 

As the main geometric technique in this note,  the proof of Theorem~\ref{prop-hyp} is based on some non-trivial properties of isometry group of $\Bbb H^k$, and we will combine them with the Gromov-Hausdorff convergence process. 

Recall that there are three kinds of isometries on $\Bbb H^k$:  elliptic which has fixed point in $\Bbb H^k$, parabolic which has a unique fixed point on the boundary of $\Bbb H^k$ at infinity, $\partial\Bbb H^k$, and hyperbolic which has exactly two fixed points on $\partial\Bbb H^k$ (cf. \cite{Eb}).   Some of the ideas in the proof of Theorem~\ref{prop-hyp} comes from \cite[\S 2]{BCG} and \cite[\S 2]{CRX}.

\begin{proof}[Proof of Theorem~\ref{prop-hyp}]

First we show that $N$ contains no hyperbolic elements.

Assume that there is $\gamma_0\in N$ which is hyperbolic and assume that $\theta, \xi$ are the exactly two fixed points of $\gamma_0$ on the boundary of $\Bbb H^k$ at infinity, $\partial \Bbb H^k$. We claim that for any $\gamma\in N$, $\gamma(\{\theta, \xi\})=\{\theta, \xi\}$. 

Indeed, if there is $\gamma'\in N$ such that $\gamma'(\theta)=\theta'\neq \theta$ or $\xi$, then the hyperbolic element $\gamma_1=\gamma'\gamma_0(\gamma')^{-1}$ satisfies that $\gamma_1(\theta')=\theta'$. By the Ping-pong lemma for cyclic subgroups and a property in \cite[8.1G]{Gr}, we know that a subgroup of $\left<\gamma_0,\gamma_1\right>$ is free which is contradict to that $N$ is nilpotent.

For any $g_0\in G_0$, $g_0^{-1}\gamma_0g_0\in N$ and thus by the claim, $g_0^{-1}\gamma_0g_0(\{\theta, \xi\})=\{\theta, \xi\}$, i.e., 
$$\gamma_0(\{g_0\theta, g_0\xi\})=g_0(\{\theta, \xi\}).$$
Since $\theta, \xi$ are the exactly two fixed points of $\gamma_0$ (or consider the unique axis $a_{\gamma_0}$ of $\gamma_0$ with endpoints $\theta, \xi$ which is preserved by $\gamma_0$, then $\gamma_0(g_0a_{\gamma_0})=g_0a_{\gamma_0}$ implies $g_0a_{\gamma_0}=a_{\gamma_0}$), we have
$$g_0(\{\theta, \xi\})=\{\theta, \xi\}.$$

Furthermore, for any $g\in G$, since $g^{-1}\gamma_0g\in G_0$ and thus $g^{-1}\gamma_0g(\{\theta, \xi\})=\{\theta, \xi\}$, the same argument as above gives that 
$$g(\{\theta, \xi\})=\{\theta, \xi\}.$$

 As $G$ preserves the unique axis $a_{\gamma_0}$ between $\theta$ and $\xi$, as pointed already in the proof of \cite[2.6.2]{CRX}, $G/G_0$ has a fixed point on $\Bbb H^k/G_0$ (the orbit of $a_{\gamma_0}(t)$) and thus is finite. Since $\Bbb H^k/G=(\Bbb H^k/ G_0)/(G/G_0)$ is compact, it follows that $\Bbb H^k/G_0$ must be also compact.
 
 On the other hand, by the standard theory of isometric group actions and the hyperbolic geometry, $\Bbb H^k/G_0$ cannot be compact. Hence a contradiction is derived. 
 
 Indeed, since $\gamma_0$ fixed $\theta$ and $\xi$, and $G_0$ is a connected Lie group, $G_0$ cannot contain elements that permute $\theta$ and $\xi$, i.e., $G_0$ fixes $\theta$ and $\xi$. It follows that $G_0$ only contains hyperbolic and elliptic elements. Then for any elliptic elements $\gamma$, $\gamma$ fixes every points of $a_{\gamma_0}$, which implies that the normal subgroup $E$ consisting of all elliptic elements of $G_0$ is compact. Since in $\mathbb{H}^k$ the normal exponential map of $a_{\gamma_0}$ from its normal bundle $T^\perp a_{\gamma_0}$ is a diffeomorphism on to $\mathbb{H}^k$ globally, $(\mathbb{H}^k, G_0)$ is equivalent to $(T^\perp a_{\gamma_0}, dG_0)$, where $dG_0$ is the action by the differential of elements of $G_0$. At the same time, $(T^\perp a_{\gamma_0}, dG_0)$ is $G_0$-diffeomorphic to the bundle $G_0\times_{(G_0)_y} T^{\perp}_{y}=(G_0\times T^{\perp}_y)/(G_0)_y$, with fiber the normal space $T^{\perp}_y$ to $a_{\gamma_0}$ at $y$, associated with the principle bundle $(G_0)_y\to G_0\to G_0/(G_0)_y$, where $(G_0)_y=E$ is the isotropy group at $y=a_{\gamma_0}(t)$ for some $t$, and any $g_0\in (G_0)_y$ acts right on $G_0$ and left on $T^\perp_y$ by $(g,v)\mapsto (gg_0^{-1},gy)$ (cf. the slice theorem \cite[Lemma 1.1]{Grove} and the discussion below there). Thus $\Bbb H^k/G_0$ is diffeomorphic to $(G_0\times_{(G_0)_y} T^{\perp}_{y})/G_0=T^{\perp}_y/dE$, which is not compact.

Secondly, we prove that $N$ contains no parabolic elements. 

In fact, assume there is a parabolic element $\gamma_0\in N$ and $\theta\in \partial \Bbb H^k$ is the unique fixed point of $\gamma_0$, $\gamma_0(\theta)=\theta$. Then for any $\gamma\in N$, $\gamma(\theta)=\theta$. This is because, if there exists $\gamma'\in N$ with $\gamma'(\theta)=\theta' \neq \theta$, then the parabolic element $\gamma_1=\gamma'\gamma_0(\gamma')^{-1}$ satisfies that
$\gamma_1(\theta')=\theta'$. By the Ping-pong lemma for cyclic subgroups again, $\left<\gamma_0, \gamma_1\right>$ is free, a contradiction to that $N$ is nilpotent.

Now we know that $N$ contains only parabolic and elliptic elements, and there is $\theta\in \partial \mathbb{H}^k$ that is fixed by all elements of $N$. Then every element of $N$ preserves globally each horoshpere centered at $\theta$ (see \cite[Proposition 3.4]{Ba} and the proof of \cite[Lemma 2.5]{BCG}). It follows that for each $\tilde y\in \Bbb H^k$, the $N$-orbit, $N{\tilde y}$, is contained in a horosphere. Note that there is no segment in any horosphere, and thus $N{\tilde y}$ contains no piece of minimal geodesic (or any three different points in $N{\tilde y}$ can not lie in one geodesic).  Now a similar argument as in the proof of \cite[Theorem 2.5]{CRX} gives a contradiction. 

Indeed, let us take $\gamma_i\in \Gamma_i\to \gamma$, a parabolic element in $N$,  under the equivariant Gromov-Hausdorff convergence. Assume $y_i\to y$ is chosen so that $\gamma_i$ is represented by $\beta_i^{-1}*c_i*\beta_i$, where $c_i$ is a closed geodesic at $y_i$ and $\beta_i$ is a minimal geodesic from $x_i$ to $y_i$.  Then the three different  points $\tilde y_i, \gamma_i\tilde y_i, \gamma_i^2\tilde y_i$ are in one geodesic, i.e., $\tilde d_i(\tilde y_i, \gamma_i\tilde y_i)+\tilde d_i(\gamma_i\tilde y_i, \gamma^2_i\tilde y_i)=\tilde d_i(\tilde y_i, \gamma_i^2\tilde y_i)$ which implies
$$2\tilde d(\tilde y, \gamma\tilde y)=\tilde d(\tilde y, \gamma^2\tilde y).$$
It contradicts to that
 $\tilde y, \gamma\tilde y, \gamma^2\tilde y$ are in a horosphere where 
$$2\tilde d(\tilde y, \gamma\tilde y)>\tilde d(\tilde y, \gamma^2\tilde y).$$

Finally we show that $N$ has common fixed points.

The method here is the same as the proof of \cite[Lemma 2.4]{BCG}. Take $\gamma_1\in Z(N)$, the center of $N$, $\gamma_1\neq e$. Since all element of $N$ are elliptic, the set $F_{\gamma_1}\subset \Bbb H^k$ of all fixed points of $\gamma_1$ is non-empty, which must be a totally geodesic submanifold of $\Bbb H^k$ (cf. \cite[1.9.2]{Eb}). Because $\gamma_1\in Z(N)$, for any $\gamma\in N$ we have $\gamma\gamma_1=\gamma_1\gamma$, and hence $\gamma(F_{\gamma_1})=F_{\gamma_1}$.  

Let us consider the restricting of $N$ on  $F_{\gamma_1}$, and $$N_1=N/\{\gamma\in N\,|\,\text{$\gamma$ acts trivially on $F_{\gamma_1}$}\}$$ be the effective quotient group. Then $N_1$ is also nilpotent. For each $\gamma\in N$, because it is elliptic, there is $x\in \mathbb{H}^k$ such that $\gamma x=x$. Let $y\in F_{\gamma_1}$ be the unique projection point of $x$ to $F_{\gamma_1}$ such that $d(x, F_{\gamma_1})=d(x, y)$. Then by
$$d(x, \gamma y)=d(\gamma x, \gamma y)=d(\gamma x, \gamma F_{\gamma_1})=d(x, y),$$
we see that $\gamma y=y$. And thus $N_1$ also contains only elliptic isometries of $F_{\gamma_1}$.  

Since $\gamma_1\neq e$, $\op{dim}(F_{\gamma_1})<\op{dim}(\Bbb H^k)$. If $N_1=\{e\}$, then we are done. If not, let us take $\gamma_2\in Z(N_1)\setminus \{e\}$ and repeat the argument above to derive an elliptically isometric action by a nilpotent group $N_2$ on  the fixed point set $F_{\gamma_2}$ of $\gamma_2$ in $F_{\gamma_1}$. Then $\op{dim}(F_{\gamma_{2}})<\op{dim}(F_{\gamma_1})$. Iterating this process, it will stop in finite $i_0$ steps such that $F_{\gamma_{i_0}}$ contains only one point or $N_{i_0}=\{e\}$, where $F_{i_0}$ is a common fixed points set of $N$.
\end{proof}

\subsection{Proof of Theorem~\ref{main} and \ref{main-1}}

In this subsection, we will finish the proofs of our main results. Since the proof of Theorem~\ref{main} and Theorem~\ref{main-1} are similar, we will only present the details for  Theorem~\ref{main}. 

\begin{proof}[Proof of Theorem~\ref{main}]
	~
	
By the precompactness Theorem~\ref{compact}, let us consider a sequence of $n$-manifolds $X_i\to X$ satisfying 
$$\bar k_{X_i}(-1, p)\leq \delta_i\to 0, \quad \op{diam}(X_i)\leq D, \quad h(X_i)=n-1-\epsilon_i\to n-1.$$
And by Lemma~\ref{diffe-pro} (\cite{PW2}), to prove Theorem~\ref{main}, we only need to show that $X$ is a hyperbolic $n$-manifold.

Applying Theorem~\ref{equ-str} and Theorem~\ref{uni-cov}, we have
\begin{equation*}\begin{array}[c]{ccc}
(\tilde X_i,\tilde x_i,\Gamma_i)&\xrightarrow{GH}&(\mathbb H^k,\tilde x,G)\\
\downarrow\scriptstyle{\hat \pi_i}&&\downarrow\scriptstyle{\hat \pi}\\
(\hat X_i=\tilde X_i/\Gamma_i^\epsilon, \hat x_i, \hat \Gamma_i=\Gamma_i/\Gamma_i^\epsilon) & \xrightarrow{GH} & (\hat X=\mathbb{H}^k/G_0, \hat x, \hat G=G/G_0)\\
\downarrow\scriptstyle{\bar \pi_i}&&\downarrow\scriptstyle{\bar \pi}\\
(X_i,x_i)&\xrightarrow{GH} &(X, x), 
\end{array} \end{equation*}
where $1\leq k\leq n$, $\tilde X_i$ is the universal cover of $X_i$, $\Gamma_i$ is the deck-transformations of $\tilde X_i$, $G_0$ is the identity component of the limit Lie group $G$ of $\Gamma_i$, $\Gamma_i^{\epsilon}=\left<\gamma\in \Gamma_i\,|\, d(\gamma(z),z)\le \epsilon, \forall z\in B_{2R_0}(\tilde x_i)\right>\to G_0$ and $\hat \Gamma_i=\Gamma_i/\Gamma_i^{\epsilon}\cong \hat G=G/G_0$, 

If $2\leq k\leq n$, by Theorem~\ref{mar-lem}, there is $\epsilon>0$ such that $\Gamma_i^{\epsilon}$ contains a finite index nilpotent normal subgroup $N_i$ with step $\leq C(n)$. Assume $N_i\to N\subset G_0$. Then $N$ is a normal subgroup of $G_0$ and is nilpotent. By Theorem~\ref{prop-hyp}, 
 all elements of $N$ are elliptic and have common fixed points. Assume $\tilde y\in \Bbb H^k$ is a fixed point of $N$, i.e., for each $\gamma\in N$, $\gamma\tilde y=\tilde y$. Let $\tilde y_i\in \tilde X_i$ with $\tilde y_i\to \tilde y$.  Then by the orbits' convergence $N_i \tilde y_i\to N\tilde y=\tilde y$ and the discreteness of $N_i$, $N_i$ must be finite. Since $N_i$ has a finite index in $\Gamma_i^{\epsilon}$, we derive that $|\Gamma_i^{\epsilon}|\le C_i<\infty$, for some $C_i$ (maybe $\to \infty$ as $i\to \infty$).

Now as in the proof of Theorem~\ref{con-pro-3} (see also \cite{CRX}), \eqref{c-2} 
holds. Hence by Lemma~\ref{con-pro-1}, 
$$\lim_{i\to\infty}h(X_i)=\lim_{i\to \infty}\lim_{R\to \infty}\frac{\ln |\hat \Gamma_i(R)|}{R}=n-1=h_{\hat x}(\hat X)\leq h_{\tilde x}(\mathbb H^k)=k-1.$$
And thus $k=n$ and $G_0=\{e\}$. Now the sequences of $X_i$ and $\widetilde X_i$ becomes the non-collapsing case. It follows from Theorem~\ref{uni-cov} that $G$ acts freely and discretely on $\mathbb{H}^n$ such that $X$ is an $n$-dimensional hyperbolic manifold $\mathbb H^n/G$.

If $k=1$, then $G_0=\{e\}$ or $G_0=\Bbb R$. For $G_0=\{e\}$, as above, by Theorem~\ref{con-pro-3}, 
$$0<\lim_{i\to\infty}h(X_i)=n-1\leq h_{\tilde x}(\mathbb R)=0,$$
a contradiction. For $G_0=\Bbb R$, $X=\Bbb R/G$ is a point. Then by Theorem~\ref{mar-lem}, $\Gamma_i$ is virtually nilpotent and thus $h(X_i)=0$, a contradiction to $h(X_i)= n-1-\epsilon_i>0$. Above all, $k\neq 1$.

Conversely, let $X_i\overset{GH}\to \mathbb H^n/\Gamma$ be a sequence of $n$-manifolds with $\bar k_{X_i}(-1, p)\leq \delta(n, p, D)$ as in Lemma~\ref{diffe-pro} and $\op{diam}(X_i)\leq D$. 
By Lemma~\ref{diffe-pro} (\cite{PW2}), for $i$ large, $X_i$ is diffeomorphic to $\mathbb H^n/\Gamma$. 

Now by Theorem~\ref{con-pro-2}, 
$$\left|\frac{h(X_i)}{n-1}-1\right|\leq \Psi(\epsilon_i | \delta).$$
 As the discussion below Theorem~\ref{con-pro-2}, we can take $\delta$ as the injective radius of $\Bbb H^n/\Gamma$ which depends on $n, D$. Thus 
$$|h(X_i)-(n-1)|\leq \Psi(\epsilon_i| n,  D).$$
\end{proof}

The proof of Theorem~\ref{main-1} is the same as above just by changing Lemma~\ref{diffe-pro}, Theorem \ref{uni-cov} in the above discussion by Lemma~\ref{home-pro} and Theorem \ref{uni-cov-1} respectively.

\end{document}